\newtheorem{theorem}{Theorem}[section]
\newtheorem*{theorem A}{Theorem A}
\newtheorem*{theorem B}{N\"olker's Theorem}
\newtheorem{corollary}{Corollary}[section]
\theoremstyle{remark}
\newtheorem{remark}{Remark}[section]
\theoremstyle{remark}
\theoremstyle{definition}
\newtheorem{definition}{Definition}[section]
\newtheorem{example}{Example}[section]
\newcommand{\wtilde}{\widetilde}
\newcommand{\E}{\mathsf{E}}
\newcommand{\GrGd}{\mathsf{GrGd}}
\newcommand{\TGr}{\mathsf{TGr}}
\newcommand{\Cov}{\mathsf{Cov}}
\newcommand{\TC}{\mathsf{TC}}
\newcommand{\Cat}{\mathsf{Cat}}
\newcommand{\Act}{\mathsf{Act}}
\newcommand{\Ker}{\mathsf{Ker}}
\newcommand{\C}{\mathsf{C}}
\newcommand{\XMod}{\mathsf{XMod}}
\begin{document}
\title{Coverings of internal groupoids and crossed modules in the category of groups with operations}

\author{H. Fulya Akız\thanks{E-mail : hfulya@gmail.com}}
\author{Nazmiye Alemdar\thanks{E-mail : nakari@erciyes.edu.tr}}
\author{Osman Mucuk\thanks{E-mail : mucuk@erciyes.edu.tr}}
\author{Tunçar Şahan\thanks{E-mail : tuncarsahan@aksaray.edu.tr}}
\affil{\small{Department of Mathematics, Erciyes University, Kayseri, TURKEY}}

\date{}

\maketitle
\begin{abstract}
In this paper we prove  some results on the covering morphisms of  internal groupoids. We also give  a result on the coverings of the crossed modules of groups with operations.
\end{abstract}

\noindent{\bf Key Words:} Internal category, covering groupoid, group with operations, crossed module
\\ {\bf Classification:} 18D35, 22A05, 20L05, 57M10
	
\section{Introduction}

A {\em groupoid}  is a small category in which each morphism is an   isomorphism \cite{Br,Ma}.  A {\em group-groupoid} is a group object in the category of groupoids \cite{BS}; equivalently, as is well known, it is an internal groupoid in the category of groups and, according to [25], it is an internal category in the category of groups.  An alternative name, quite generally used,  is  ``$2$-group'',
see for example \cite{baez-lauda-2-groups}.   Recently the notion of monodromy for  topological  group-groupoids was  introduced and investigated in  \cite{Mu-Be-Tu-Na}, and normality and quotients in group-groupoids were studied  in \cite{Mu-Tu-Na}.

A {\em crossed module} defined by  Whitehead in \cite{Wth1,Wth2} can be viewed as a
2-dimensional group \cite{BrLowDim} and  has  been  widely used  in: homotopy theory \cite{Br-Hi-Si};
the theory of identities among relations for group presentations \cite{BrownHubesshman};    algebraic $K$-theory \cite{Loday}; and homological algebra, \cite{Hubeshman, Lue}.  See \cite[p.~49]{Br-Hi-Si} for a discussion of the relation of crossed modules to crossed squares and so to homotopy 3-types.

In \cite[Theorem 1]{BS} Brown and Spencer proved that the category of
internal categories within the groups, i.e. group-groupoids,  is equivalent to the
category of crossed modules of groups. Then in \cite[Section 3]{Por}, Porter proved that a similar result holds for certain algebraic categories $\C$, introduced by Orzech \cite{Orz}, the  definition of which was adapted by him and called category of groups with operations. Applying Porter's result, the study of internal category theory in $\C$ was continued in the works of Datuashvili \cite{Wh} and \cite{Kanex}. Moreover, she developed the cohomology theory of internal categories, equivalently, crossed modules, in categories of groups with operations \cite{Coh, Cohtr}. In a similar way, the results of  \cite{BS} and  \cite{Por} enabled us to prove some properties of covering groupoids for internal groupoids.

On the other hand, there are some important  results on  the covering groupoids of  group-groupoids.  One is that the group structure of a group-groupoid lifts to its  some   covering groupoids, i.e.  if $G$ is a group-groupoid, $0$ is the identity element of the additive group  $G_0$ of objects and $p\colon (\widetilde{G},\tilde{0})\rightarrow (G,0)$ is a  covering morphism of groupoids whose  characteristic group is a subgroup of $G$, then $\widetilde{G}$  becomes a group-groupoid such that $p$ is a morphism of  group-groupoids \cite[Theorem 2.7]{Na-Mu}.  Another is that if $X$ is a topological group whose topology is semi-locally simply connected, then the category ${\Cov}_{\TGr}/X$ of covers of $X$ in the category of  topological  groups   and the category ${\Cov}_{\GrGd}/\pi(X)$ of covers of  $\pi(X)$ in the category of group-groupoids  are equivalent \cite[Proposition 2.3]{BM1}. The another one   is that for a group-groupoid  $G$  the category   ${\Cov}_{\GrGd}/G$ of  covers of $G$ in the category of   group-groupoids  is equivalent to the category $\Act_{\GrGd}/G$ of group-groupoid actions of $G$ on groups \cite[Proposition 3.1]{BM1}. The final one is that if $X$ is a topological group, then the restriction $d_1\colon St_{\pi (X)} e\rightarrow X$ of the  final point map to the star at the identity $e\in X$  is a crossed module of groups and the category ${\Cov}_{\GrGd}/\pi(X)$ of covers of $\pi(X)$ in the category of  group-groupoids  and the category of covers of  $St_{\pi (X)} e\rightarrow X$ in the category of crossed modules within groups are equivalent \cite[Corollary 4.3]{BM1}.

The object of this paper is to prove that the above-mentioned results can be generalized to a wide class of algebraic categories, which include categories of groups, rings, associative algebras, associative commutative algebras, Lie algebras, Leibniz algebras, alternative algebras and others. These are conveniently handled by working in a category $\C$.   For the first result we prove that if  $G$ is an internal groupoid in $\C$   and $p\colon (\widetilde{G},\tilde{0})\rightarrow (G,0)$ is a covering morphism of groupoids whose characteristic group is closed under the group operations of $G$, then  $\widetilde{G}$  becomes an internal groupoid  such that $p$ is a morphism of internal groupoids.   For the second result  we prove that   if $X$ is a topological group with operations whose topology is semi-locally simply connected, then the category ${\Cov}_{\TC}/X$ of covers of $X$ in the category of topological groups with operations  and the category ${\Cov}_{\Cat(\C)}/\pi(X)$ of  covers of $\pi (X)$ in the category $\Cat(\C)$ of internal categories, equivalently, internal
groupoids in $\C$ (see the note after Definition \ref{Internalgpd}),  are equivalent. For the third one  we prove that if  $G$ is an internal groupoid in $\C$, then the category ${\Cov}_{\Cat(\C)}/G$ of covers of $G$ in the category of internal groupoids in $\C$ is equivalent to the category  $\Act_{\Cat(\C)}/G$ of internal groupoid  actions of $G$ on groups with operations. Finally we prove that if $G$ is an internal groupoid in $\C$ and $\alpha\colon A\rightarrow B$ is the crossed module corresponding to $G$,  then  the category ${\Cov}_{\Cat(\C)}/G$  and the category ${\Cov}_{\XMod}/(\alpha\colon A\rightarrow B)$ of  covering  crossed modules of  $\alpha\colon A\rightarrow B$ are  equivalent.

\section{Covering morphisms of groupoids}
As is defined in  \cite{Br,Ma}, a  groupoid $G$ has a set $G$ of morphisms, which we call just
{\em elements} of $G$, a set $G_0$ of {\em objects} together with maps
$d_0, d_1\colon G\rightarrow G_0$ and $\epsilon \colon G_0 \rightarrow
G$
such that $d_0\epsilon=d_1\epsilon=1_{G_0}$. The
maps
$d_0$, $d_1$ are called {\em initial} and {\em final} point maps, respectively,
and the map $\epsilon$ is called the {\em object inclusion}.
If $a,b\in G$ and $d_1(a)=d_0(b)$, then the {\em composite}
$a\circ b$ exists such that $d_0(a\circ b)=d_0(a)$ and $d_1(a\circ b)=d_1(b)$. So
there exists a partial composition  defined by
$ G_{d_1}\times_{d_0} G\rightarrow G, (a,b)\mapsto a\circ b$, where
$G_{d_1}\times_{d_0} G$ is the pullback of ${d_1}$ and ${d_0}$.
Further, this partial composition is associative, for $x\in G_0$ the element $\epsilon (x)$  acts as the identity, and each element $a$ has an inverse $a^{-1}$ such
that ${d_0}(a^{-1})={d_1}(a)$, ${d_1}(a^{-1})={d_0}(a)$, $a\circ a^{-1}=\epsilon {d_0}(a)$ and
$a^{-1}\circ a=\epsilon {d_1}(a)$. The map $G\rightarrow G$, $a\mapsto a^{-1}$
is called the {\em inversion}.

In a groupoid $G$, for $x,y\in G_0$ we write $G(x,y)$ for the set of all
morphisms with initial point $x$ and final point $y$. According to \cite{Br} $G$ is
{\em transitive} if for all $x,y \in G_0$, the set  $G(x,y)$ is not empty; for
$x \in G_0$  the {\em star}  of $x$ is defined as $\{a\in G\mid s(a)=x\} $ and denoted  as  $St_G x$; and the {\em object group} at $x$ is  defined as $G(x,x)$ and denoted as $G(x)$ .

Let $G$ and $H$ be groupoids. A {\em morphism} from $H$ to $G$ is
a pair of maps  $f\colon H\rightarrow G$ and $f_0\colon
H_0\rightarrow G_0$ such that $d_0 f=f_0 d_0$, $d_1
f=f_0 d_1$, $f\epsilon=\epsilon f_0$ and $f(a\circ b)=f(a)\circ f(b)$ for all $(a,b)\in
H_{d_1}\times_{d_0} H$. For such a morphism we simply write $f\colon
H\rightarrow G$.

Let $p\colon\wtilde G\rightarrow G$ be a morphism of groupoids. Then $p$ is
called a {\em covering morphism} and $\widetilde{G}$  a {\em  covering groupoid} of $G$ if for
each $\tilde x\in {\wtilde G}_0$ the restriction  $St_{\widetilde{G}} {\tilde{x}}\rightarrow St_G{p(\tilde x)}$  is  bijective. A covering morphism $p\colon \widetilde{G}\rightarrow
G$ is called {\em transitive } if both $\widetilde{G}$ and $G$ are
transitive.  A transitive covering morphism $p\colon\wtilde G\rightarrow G$
is called {\em universal} if $\wtilde G$ covers every cover of $G$, i.e.  if
for every covering morphism $q\colon \widetilde{H}\rightarrow G$ there is a unique
morphism of groupoids $\tilde{p}\colon \wtilde G\rightarrow \widetilde{H}$ such that $q\tilde{p}=p$
(and hence $\tilde{p}$ is also a covering morphism), this is equivalent to that for
$\tilde{x}, \tilde{y}\in O_{\wtilde G}$ the set $\wtilde{G}(\tilde x, \tilde y)$
has not more than one element.

A morphism  $p\colon
(\widetilde{G},\tilde{x})\rightarrow (G,x)$ of pointed groupoids is called a {\em
	covering morphism} if the
morphism $p\colon \widetilde{G}\rightarrow G$ is a covering
morphism.  Let $p\colon (\wtilde G,\tilde{x})\rightarrow (G,x)$ be a covering morphism of
groupoids  and  $f\colon (H,z)\rightarrow (G,x)$ a morphism of
groupoids. We say $f$ lifts to $p$ if there exists a unique
morphism $\tilde f\colon (H,z)\rightarrow (\wtilde G,\tilde{x})$
such that $f=p\tilde f$. For any groupoid morphism $p\colon \widetilde{G}\rightarrow G$ and
an object $\tilde{x}$ of $\widetilde{G}$ we call  the subgroup
$p(\widetilde{G}(\tilde{x}))$ of $G(p\tilde{x})$ the {\em
	characteristic group} of $p$ at $\tilde{x}$.

The following result gives a criterion on the liftings of
morphisms  \cite[10.3.3]{Br}.

\begin{theorem}\label{Liftcov} Let $p\colon(\wtilde
	G,\tilde{x})\rightarrow (G,x)$ be a covering morphism of
	groupoids and  $f\colon (H,z)\rightarrow (G,x)$ a morphism such that
	$H$ is transitive. Then the morphism $f\colon (H,z)\rightarrow (G,x)$
	lifts to a morphism $\tilde f\colon (H,z)\rightarrow
	(\widetilde{G},\tilde{x})$  if and only if the characteristic
	group of $f$ is contained in that of $p$; and if this lifting
	exists, then it is unique.
\end{theorem}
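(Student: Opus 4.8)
The plan is to dispatch the easy ``only if'' implication directly, then construct the lift explicitly in the ``if'' direction using the defining star bijections of $p$ together with the transitivity of $H$. Throughout set $x=p(\tilde x)=f_0(z)$, so that the two characteristic groups $f(H(z))$ and $p(\wtilde G(\tilde x))$ are both subgroups of $G(x)$.

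For necessity, suppose a lift $\tilde f\colon(H,z)\to(\wtilde G,\tilde x)$ with $p\tilde f=f$ exists. Being a morphism of groupoids sending $z$ to $\tilde x$, $\tilde f$ carries the object group $H(z)$ into $\wtilde G(\tilde x)$; applying $p$ yields $f(H(z))=p\bigl(\tilde f(H(z))\bigr)\subseteq p(\wtilde G(\tilde x))$, which is the required containment. This uses only that a groupoid morphism restricts to a homomorphism on object groups.

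For sufficiency, assume $f(H(z))\subseteq p(\wtilde G(\tilde x))$. Since $H$ is transitive, for each object $w$ of $H$ I would fix a morphism $\alpha_w\in H(z,w)$ with $\alpha_z=\epsilon(z)$. Then $f(\alpha_w)\in St_G x$, so the star bijection $St_{\wtilde G}\tilde x\to St_G x$ supplies a unique lift $\widetilde{\alpha_w}\in St_{\wtilde G}\tilde x$, and I set $\tilde f_0(w)=d_1(\widetilde{\alpha_w})$; one checks $p\tilde f_0(w)=f_0(w)$ and $\tilde f_0(z)=\tilde x$. To define $\tilde f$ on an element $b$ with $d_0(b)=w$, note $f(b)\in St_G f_0(w)=St_G p\tilde f_0(w)$, so the star bijection at $\tilde f_0(w)$ gives a unique $\tilde f(b)\in St_{\wtilde G}\tilde f_0(w)$ with $p\tilde f(b)=f(b)$. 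By construction $p\tilde f=f$ and $d_0\tilde f=\tilde f_0 d_0$.

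It remains to verify that $\tilde f$ is actually a morphism of groupoids, and this is the step I expect to be the main obstacle, since it is exactly here that the characteristic-group hypothesis is consumed. The key point is that for $c\in H(z)$ the element $f(c)$ lies in $f(H(z))\subseteq p(\wtilde G(\tilde x))$, so its unique star lift $\tilde f(c)$ lands in the object group $\wtilde G(\tilde x)$ and hence $d_1\tilde f(c)=\tilde x$. The identity $d_1\tilde f=\tilde f_0 d_1$ then follows by writing, for $b\in H(w,w')$, the morphism $\alpha_w\circ b$ as $(\alpha_w\circ b\circ\alpha_{w'}^{-1})\circ\alpha_{w'}$ with $\alpha_w\circ b\circ\alpha_{w'}^{-1}\in H(z)$, applying $f$, and lifting through the star bijection at $\tilde x$; preservation of identities and of composites is then a formal consequence of the uniqueness of star lifts. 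Finally, uniqueness of the lift is immediate: two lifts agree at $\tilde x$, and the star bijection together with transitivity propagates this agreement to every star, hence to all of $H$.
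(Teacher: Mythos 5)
Your proof is correct; note, though, that the paper does not prove this theorem at all but imports it verbatim from Brown's \emph{Topology and Groupoids} (10.3.3), so the only available comparison is with the classical argument there. Your construction --- fixing $\alpha_w\in H(z,w)$ with $\alpha_z=\epsilon(z)$, lifting through the star bijections, and consuming the characteristic-group hypothesis exactly at the verification $d_1\tilde{f}=\tilde{f}_0 d_1$ via the decomposition $\alpha_w\circ b=(\alpha_w\circ b\circ\alpha_{w'}^{-1})\circ\alpha_{w'}$ with middle term in $H(z)$ --- is essentially that standard proof, merely reorganized to define the object map first rather than using Brown's direct formula $\tilde{f}(b)=\bigl(\widetilde{f(\alpha_w)}\bigr)^{-1}\circ\widetilde{f(\alpha_w\circ b)}$ with its accompanying well-definedness check, and all the steps you flag as formal consequences of uniqueness of star lifts (preservation of identities and composites, uniqueness of the lift via transitivity) do go through as claimed.
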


As a result of Theorem  \ref{Liftcov} the  following
corollary is stated in \cite[10.3.4]{Br}.

\begin{corollary} \label{CorLiftcov} Let $p\colon (\widetilde{G},\tilde{x})\rightarrow (G,x)$ and
	$q\colon (\widetilde{H},\tilde{z})\rightarrow (G,x)$ be transitive
	covering morphisms with characteristic groups $C$ and $D$,
	respectively. If $C\subseteq D$, then there is a unique covering
	morphism $r\colon (\widetilde{G},\tilde{x})\rightarrow
	(\widetilde{H},\tilde{z})$  such that $p=qr$. If $C=D$, then $r$
	is an isomorphism.\end{corollary}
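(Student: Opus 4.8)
The plan is to derive both assertions from the lifting criterion of Theorem \ref{Liftcov}, applying it with the two covering morphisms in complementary roles. First I would produce $r$ by regarding $q\colon (\widetilde{H},\tilde{z})\rightarrow (G,x)$ as the covering morphism and $p\colon (\widetilde{G},\tilde{x})\rightarrow (G,x)$ as the morphism to be lifted. Since $p$ is a transitive covering morphism, $\widetilde{G}$ is transitive, so $p$ qualifies as a morphism out of a transitive groupoid. The characteristic group of the morphism $p$ at $\tilde{x}$ is exactly $C=p(\widetilde{G}(\tilde{x}))$, while the characteristic group of the covering morphism $q$ at $\tilde{z}$ is $D=q(\widetilde{H}(\tilde{z}))$. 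The hypothesis $C\subseteq D$ is precisely the condition of Theorem \ref{Liftcov}, so $p$ lifts to a unique morphism $r\colon (\widetilde{G},\tilde{x})\rightarrow (\widetilde{H},\tilde{z})$ with $qr=p$; the uniqueness clause of that theorem yields the uniqueness of $r$ at once.

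The step I expect to require the most care is verifying that $r$ is itself a \emph{covering} morphism, since Theorem \ref{Liftcov} only produces a morphism of groupoids. Here I would argue star by star. For each object $\tilde{a}$ of $\widetilde{G}$, the relation $p=qr$ makes the star map of $p$ at $\tilde{a}$ factor as the composite
\[
St_{\widetilde{G}}\tilde{a}\xrightarrow{\;r\;}St_{\widetilde{H}}r(\tilde{a})\xrightarrow{\;q\;}St_G\,p(\tilde{a}).
\]
Because $p$ is a covering morphism, this composite is a bijection, and because $q$ is a covering morphism, the second map is a bijection; hence the first map, the star map of $r$ at $\tilde{a}$, is a bijection as well. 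As this holds for every $\tilde{a}$, the morphism $r$ is a covering morphism, which completes the first assertion.

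For the final assertion, suppose $C=D$. Then $D\subseteq C$ also holds, so by the symmetric application of the argument above there is a unique covering morphism $s\colon (\widetilde{H},\tilde{z})\rightarrow (\widetilde{G},\tilde{x})$ with $ps=q$. I would then invoke uniqueness to identify the composites with identities: the morphism $rs\colon (\widetilde{H},\tilde{z})\rightarrow (\widetilde{H},\tilde{z})$ satisfies $q(rs)=(qr)s=ps=q$, and so does $1_{\widetilde{H}}$; since both are pointed lifts of $q$ through the covering morphism $q$, the uniqueness part of Theorem \ref{Liftcov} forces $rs=1_{\widetilde{H}}$. The same reasoning with the roles of $p$ and $q$ exchanged gives $sr=1_{\widetilde{G}}$, so $r$ is an isomorphism with inverse $s$.
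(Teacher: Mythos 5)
Your proof is correct and follows precisely the route the paper intends: the paper gives no proof of Corollary \ref{CorLiftcov}, merely noting it is ``a result of Theorem \ref{Liftcov}'' (citing Brown, 10.3.4), and your argument is the standard derivation along exactly those lines --- existence and uniqueness of $r$ from the lifting criterion applied to $p$ through $q$, the star-map factorization $q_{*}\circ r_{*}=p_{*}$ to see $r$ is a covering morphism, and the uniqueness clause to force $rs=1_{\widetilde{H}}$ and $sr=1_{\widetilde{G}}$ when $C=D$. All steps check out, including the points where care is needed (transitivity of $\widetilde{G}$ and $\widetilde{H}$ for the two applications of Theorem \ref{Liftcov}, and that $rs$ and $sr$ are pointed lifts).
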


The action of a groupoid on a set is defined in \cite[p.373]{Br} as follows.
\begin{definition} \label{gpdactionSet} Let $G$ be a groupoid. An {\em action of $G$ on a set } consists
	of a set $X$, a function $\theta\colon X\rightarrow G_0$ and a
	function $\varphi\colon X_{\theta}\times_{d_0}G\rightarrow X,
	(x,a)\mapsto xa$ defined on the pullback  $X_{\theta}\times_{d_0} G$ of $\theta$ and $d_0$ such that
	
	\begin{enumerate}
		\item  $\theta(xa)=d_1(a)$ for $(x,a)\in X_{\theta}\times_{d_0} G$;
		
		\item  $x(a\circ b)=(xa)b$ for  $(a,b)\in G_{d_1}\times_{d_0}G$ and $(x,a)\in X_{\theta}\times_{d_0} G $;
		
		\item $x\epsilon({\theta(x))}=x$ for $x\in X$.
		
	\end{enumerate}
\end{definition}

According to \cite{Br}, given such an action, the {\em action groupoid } $G\ltimes X$ is
defined to be the groupoid with object set $X$ and with  elements of
$(G\ltimes X)(x,y)$ the pairs $(a,x)$ such that $a\in
G(\theta(x),\theta(y))$ and $xa=y$. The groupoid composite is
defined to be
\[  (a,x)\circ (b,y)=(a\circ b,x)\] when $y=xa$.

The following result is from \cite[10.4.3]{Br}. We need some details of its proof in the proofs of Theorem  \ref{IGdactsonX} and  Theorem \ref{groupoidgrouplift}.

\begin{theorem} \label{Theoactiongpdcover} Let $x$ be an object of a transitive  groupoid $G$,and let $C$ be a \mbox{subgroup} of the object group $G(x)$. Then there exists a covering morphism $q\colon(\widetilde{G}_C,\tilde{x})\rightarrow (G,x)$ with characteristic group $C$.
\end{theorem}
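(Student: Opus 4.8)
The plan is to realize $\widetilde{G}_C$ as an action groupoid $G\ltimes X$, exploiting the machinery recalled just above in Definition~\ref{gpdactionSet} and the subsequent action groupoid construction. The only creative choice is the $G$-set $X$, and the natural candidate is a coset space inside the star of $x$. Concretely, since $C\leq G(x)=G(x,x)$, the group $C$ acts on the star $St_G x$ on the left by $a\mapsto c\circ a$ (this composite exists because $d_1(c)=x=d_0(a)$ for $c\in C$, $a\in St_G x$). I would take $X$ to be the set of left cosets $\bar a=\{c\circ a\mid c\in C\}$, and set $\tilde x=\overline{\epsilon(x)}$, the coset of the identity at $x$.

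Next I would equip $X$ with the structure of a $G$-set. Define $\theta\colon X\rightarrow G_0$ by $\theta(\bar a)=d_1(a)$, which is well defined since left multiplication by $C$ does not change the final point. For the action I would set $\bar a\cdot g=\overline{a\circ g}$ whenever $\theta(\bar a)=d_1(a)=d_0(g)$, so that $a\circ g$ exists and again lies in $St_G x$. Well-definedness modulo $C$ is the first thing to check: if $\bar a=\bar{a'}$ then $a'=c\circ a$ for some $c\in C$, whence $a'\circ g=c\circ(a\circ g)$ and so $\overline{a'\circ g}=\overline{a\circ g}$. The three axioms of Definition~\ref{gpdactionSet} then follow by direct computation, using associativity of $\circ$ for axiom (2) and $a\circ\epsilon(d_1(a))=a$ for axiom (3).

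With the action in hand I would form $\widetilde{G}_C=G\ltimes X$ and define $q\colon(\widetilde{G}_C,\tilde x)\rightarrow(G,x)$ to be the projection: $q(g,\bar a)=g$ on morphisms and $q_0(\bar a)=\theta(\bar a)=d_1(a)$ on objects. That $q$ is a groupoid morphism is immediate from the definition of the action groupoid composite $(g,\bar a)\circ(h,\bar b)=(g\circ h,\bar a)$. To see it is a covering morphism, fix an object $\bar a$; the star $St_{\widetilde{G}_C}\bar a$ consists of the pairs $(g,\bar a)$ with $d_0(g)=d_1(a)$, and $(g,\bar a)\mapsto g$ is visibly a bijection onto $St_G(d_1(a))=St_G\,q_0(\bar a)$, since $g$ determines the pair and every such $g$ is admissible. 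Finally I would compute the characteristic group at $\tilde x$: the elements of $\widetilde{G}_C(\tilde x)$ are the pairs $(g,\tilde x)$ with $g\in G(x)$ and $\tilde x\cdot g=\tilde x$, i.e. $\overline{\epsilon(x)\circ g}=\overline{\epsilon(x)}$, which says $\bar g=\overline{\epsilon(x)}$, equivalently $g\in C$; hence $q\bigl(\widetilde{G}_C(\tilde x)\bigr)=C$ exactly, as required.

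I do not expect a genuine obstacle here, only points demanding care. The role of transitivity of $G$ is the one place where the hypothesis is actually consumed: it guarantees that $q_0\colon X\rightarrow G_0$ is surjective (every object $y$ admits some $a\in G(x,y)$, so $\bar a$ lies over $y$), so that $\widetilde{G}_C$ genuinely covers all of $G$ rather than just the component of $x$; note also that $\widetilde{G}_C$ is itself transitive, since any two objects $\bar a,\bar b$ are joined by $(a^{-1}\circ b,\bar a)$. The step that most needs attention is verifying that the action, and hence the whole groupoid structure, descends to the quotient by $C$ consistently, together with pinning down that the characteristic group is precisely $C$ and not merely contained in it.
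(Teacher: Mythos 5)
Your proposal is correct and follows essentially the same route as the paper's proof: the same coset space $X=\{C\circ a\mid a\in St_G x\}$ with $\theta(C\circ a)=d_1(a)$, the same action $(C\circ a)\cdot g=C\circ(a\circ g)$, the action groupoid $\widetilde{G}_C=G\ltimes X$ with the projection as $q$, and the basepoint $\tilde x=C$. You merely spell out the verifications (well-definedness, the star bijections, the computation that the characteristic group is exactly $C$, and the role of transitivity) that the paper's sketch leaves implicit.
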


\begin{proof}
	We give a  sketch proof for a technical method: Let $X$ be the set of  cosets
	$C\circ a=\{c\circ a\mid c\in\ C\}$ for $a$ in $St_G x$. Let $\theta\colon
	X\rightarrow G_0$ be a map, which sends  $C\circ a$ to the final point of $a$. The function $\theta$ is well defined because  if $C\circ a=C\circ b$ then  $t(a)=t(b)$.  The groupoid $G$ acts on $X$ by \[\varphi\colon X_{\theta}\times_{d_0}G\rightarrow X, (C \circ a,
	g)\mapsto C\circ (a\circ  g).\] The required groupoid $\widetilde{G}_C$ is
	taken to be the action groupoid $G\ltimes X$. Then the  projection
	$q\colon \widetilde{G}_C\rightarrow G$ given on objects by
	$\theta\colon X\rightarrow G_0$ and on elements by $(g,C \circ a)\mapsto
	g$, is a covering morphism of groupoids and has the characteristic
	group $C$.  Here  the groupoid composite on $\widetilde{G}_C$ is defined by
	\[(g,C\circ a)\circ(h,C\circ b)=(g\circ h,C\circ a)\]
	whenever $C\circ b=C\circ a \circ g$. The required object
	$\tilde{x}\in \widetilde{G}_C$ is the coset $C$.
\end{proof}

\section{Groups with operations and internal categories}
The idea  of the definition of categories of groups with
operations comes from Higgins \cite{Hig} and Orzech \cite {Orz};
and the definition below is from Porter \cite{Por} and Datuashvili \cite[p.~21]{Tamar}, which is adapted from Orzech \cite {Orz}.

From now on  $\C$  will be a category of groups with a set  of  operations $\Omega$ and with a set $\E$  of identities such that $\E$ includes the group laws, and the following conditions hold: If $\Omega_i$ is the set of $i$-ary operations in $\Omega$, then

(a) $\Omega=\Omega_0\cup\Omega_1\cup\Omega_2$;

(b) The group operations written additively $0,-$ and $+$ are
the  elements of $\Omega_0$, $\Omega_1$ and
$\Omega_2$ respectively. Let $\Omega_2'=\Omega_2\backslash \{+\}$,
$\Omega_1'=\Omega_1\backslash \{-\}$ and assume that if $\star\in
\Omega_2'$, then $\star^{\circ}$ defined by
$a\star^{\circ}b=b\star a$ is also in $\Omega_2'$. Also assume
that $\Omega_0=\{0\}$;

(c) For each   $\star \in \Omega_2'$, $\E$ includes the identity
$a\star (b+c)=a\star b+a\star c$;

(d) For each  $\omega\in \Omega_1'$ and $\star\in \Omega_2' $, $\E$
includes the identities  $\omega(a+b)=\omega(a)+\omega(b)$ and
$\omega(a)\star b=\omega(a\star b)$.

A category satisfying the conditions (a)-(d) is called a {\em category of groups with operations}.

A {\em  morphism} between any two objects of $\C$ is a group homomorphism, which preserves the operations from $\Omega_1'$ and $\Omega_2'$.

\begin{remark}  The set $\Omega_0$ contains exactly one element, the
	group identity; hence for instance the category of associative rings with unit is not a category of  groups with operations.\end{remark}

\begin{example} The categories of  groups, rings generally  without identity, $R$-mo\-du\-les,  associative, associative commutative, Lie, Leibniz,
	alternative algebras are examples of categories of   groups with operations.
\end{example}

If $A$ and $B$ are objects of $\C$ an {\em extension} of $A$ by $B$ is
an exact sequence
\[0\longrightarrow A\stackrel{\imath}\longrightarrow E\stackrel{p}\longrightarrow B\longrightarrow 0\]
in which $p$ is surjective and $\imath$ is the kernel of $p$.  It is {\em split } if there is a morphism $s\colon  B \to E$ such
that $p s = \imath d _B$.  A split extension of $B$ by $A$ is called a {\em  $B$-structure} on $A$.  Given  such a $B$-structure on $A$, we  get the actions of $B$ on $A$ corresponding to the operations in $\C$. For any $b\in B$, $a\in A$ and $\star\in \Omega'_2$ we have the actions called {\em derived actions} by Orzech \cite[p.~293]{Orz}
\begin{align*}
	b\cdot a & = s(b)+a-s(b),\\
	b\star a  & = s(b)\star a.
\end{align*}

\begin{theorem}[{\cite[Theorem 2.4]{Orz}}]\label{Derivedaction}  A set of actions $($one for each operation in $\Omega_2)$ is a set of
	derived actions if and only if the semidirect product $B \ltimes
	A$ with underlying set $B \times A$ and operations
	\begin{align*}
		(b', a') + (b, a) &= (b' + b, a'\cdot b+a), \\
		(b', a') \star (b, a) &= (b' \star b, b' \star a + a' \star b + a' \star a)
	\end{align*}
	is an object in $\C$.
\end{theorem}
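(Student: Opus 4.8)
The plan is to prove the two implications separately, each by passing through the canonical bijection between the underlying set $B\times A$ and the middle term of a split extension. Throughout I identify $A$ with its image $\imath(A)\subseteq E$, consistent with the formulas $b\cdot a=s(b)+a-s(b)$ and $b\star a=s(b)\star a$ defining the derived actions.

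For the forward implication, suppose the given actions are derived from a split extension $0\to A\stackrel{\imath}{\longrightarrow}E\stackrel{p}{\longrightarrow}B\to 0$ with section $s$. I would define $\phi\colon B\ltimes A\to E$ by $\phi(b,a)=s(b)+a$, with the unary operations on $B\ltimes A$ taken componentwise. Since the extension splits, each $e\in E$ is uniquely $s(p(e))+a$ with $a=e-s(p(e))\in A$, so $\phi$ is a bijection. The substance is to check that $\phi$ carries the operations displayed in the theorem to the operations of $E$: for $+$ this is the usual semidirect-product computation and reduces to the defining identity for the conjugation derived action; for each $\star\in\Omega_2'$ one expands $(s(b')+a')\star(s(b)+a)$ using distributivity~(c), the fact that $s$ preserves $\star$, and $s(b)\star a=b\star a$, obtaining precisely $s(b'\star b)+b'\star a+a'\star b+a'\star a$; the compatibilities~(d) give the corresponding statement for each $\omega\in\Omega_1'$. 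Granting that $\phi$ intertwines every operation, it is an isomorphism of $\Omega$-algebras, so $B\ltimes A$ satisfies exactly the identities $\E$ that $E$ does, whence $B\ltimes A$ is an object of $\C$.

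For the converse, suppose $B\ltimes A$ with the displayed operations is an object of $\C$. I would produce the canonical extension $0\to A\stackrel{\imath}{\longrightarrow}B\ltimes A\stackrel{p}{\longrightarrow}B\to 0$ with $\imath(a)=(0,a)$, $p(b,a)=b$ and section $s(b)=(b,0)$. One checks that $\imath$, $p$ and $s$ are morphisms of $\C$ — using $a\star 0=0=0\star a$ and $\omega(0)=0$, which follow from~(c) and~(d) — that $\imath=\Ker p$, that $p$ is surjective, and that $ps=1_B$, so this is a split extension in $\C$. Finally, computing its derived actions, $b\cdot a=(b,0)+(0,a)-(b,0)$ and $b\star a=(b,0)\star(0,a)$, and simplifying with the displayed formulas returns the original actions; hence they are a set of derived actions.

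I expect the main obstacle to be the verification, in the forward direction, that $\phi$ respects the operations $\star\in\Omega_2'$ — equivalently, that the displayed $\star$-formula is forced. Here the mixed term $a'\star s(b)$ must be rewritten as a derived action (via $\star^{\circ}$), and the interaction of the conjugation action with the $\star$-actions and the unary operations must be tracked. Conditions~(c) and~(d) are exactly what make these reductions go through, and they are the reason the correspondence holds uniformly across all categories of groups with operations rather than only for the classical case of groups.
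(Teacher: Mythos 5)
The paper contains no proof of this statement to compare against: it is imported verbatim from Orzech \cite[Theorem 2.4]{Orz} and used as a black box, so your proposal must be judged on its own terms, and on those terms it is the standard transport-of-structure argument (essentially Orzech's own) and is sound. In the forward direction the bijection $\phi(b,a)=s(b)+a$ does exactly what you claim: expanding $(s(b')+a')\star(s(b)+a)$ first in the left argument --- legitimate because $\star^{\circ}\in\Omega_2'$ together with axiom (c) makes $\star$ distribute over $+$ on both sides --- and then in the right argument yields $s(b')\star s(b)+s(b')\star a+a'\star s(b)+a'\star a$, which is the displayed formula in the displayed order once $a'\star s(b)$ is read as the $\star^{\circ}$-derived action, precisely as you anticipate; note that no commutation of the cross terms is ever needed, which matters since $A$ need not be abelian. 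Two small repairs are worth making. First, in your bijectivity argument the $A$-component of $e$ is $a=-s(p(e))+e$, not $e-s(p(e))$ (these differ in a nonabelian $A$); relatedly, with your convention $\phi(b,a)=s(b)+a$ the additive computation forces the reading $a'\cdot b=-s(b)+a'+s(b)$, a right conjugation action, which matches the theorem's display but not literally the paper's earlier definition $b\cdot a=s(b)+a-s(b)$, so a sentence fixing the convention is needed. Second, in the converse you check that $\imath$, $p$, $s$ are morphisms using only $a\star 0=0=0\star a$ and $\omega(0)=0$; you also need identities such as $a\cdot 0=a$ and $0\cdot b=0$ (e.g.\ $\imath(a')+\imath(a)=(0,a'\cdot 0+a)$ must equal $(0,a'+a)$). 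These are not assumptions on the abstract set of actions, but they are forced by the hypothesis that $B\ltimes A$ is an object of $\C$ with zero $(0,0)$: the group axiom $(b,a)+(0,0)=(b,a)$ gives $a\cdot 0=a$, and $(0,0)+(b,a)=(b,a)$ gives $0\cdot b=0$, with analogous specializations handling the $\star$-terms in the verification that the derived actions of the canonical split extension return the original ones. With these touch-ups your outline is a complete and correct proof of the cited theorem.
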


\begin{definition}[{\cite[Definition 1.5]{Orz}}]   Let $X$ be an object of $\C$. A subobject  $A$  of $X$ is called an {\em ideal} if it is the kernel of some morphism.
\end{definition}

The concept of ideal is characterized as follows.

\begin{theorem}[{\cite[Theorem 1.7]{Orz}}]  Let $A$ be a subobject of the object $X$.  Then  $A$ is an ideal of $X$ if and only if the following conditions hold:
	\begin{enumerate}
		\item $A$ is a normal subgroup of $X$;
		\item  $a\star x\in A$ for   $a\in A$, $x\in X$ and $\star\in\Omega_2'$.
	\end{enumerate}
\end{theorem}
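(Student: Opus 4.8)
The plan is to prove the two implications separately, the forward one by a direct kernel computation and the backward one by promoting the group quotient $X/A$ to an object of $\C$. Two preliminary remarks organize the argument. First, a subobject $A$ of $X$ is by definition closed under \emph{every} operation in $\Omega$, so in particular $\omega(A)\subseteq A$ for each $\omega\in\Omega_1'$; this closure is needed below and need not be derived. Second, I record the elementary identity $0\star y=0$ for every $\star\in\Omega_2'$ and $y\in X$: writing $0\star y=(0+0)\star y$ and using distributivity in the first variable (valid because $\star^{\circ}\in\Omega_2'$ also satisfies condition (c)) gives $0\star y=0\star y+0\star y$, whence $0\star y=0$ after cancellation in the group.

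For necessity, suppose $A=\Ker f$ for a morphism $f\colon X\to Y$ in $\C$. Since $f$ is in particular a group homomorphism, $\Ker f$ is a normal subgroup, which is (1). For (2), take $a\in A$, $x\in X$ and $\star\in\Omega_2'$; because $f$ preserves $\star$ we get $f(a\star x)=f(a)\star f(x)=0\star f(x)=0$ by the identity above, so $a\star x\in\Ker f=A$. This direction is routine.

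For sufficiency, assume (1) and (2). Because $A$ is a normal subgroup the group quotient $X/A$ exists, and I promote it to an object of $\C$ by setting $\omega(x+A)=\omega(x)+A$ for $\omega\in\Omega_1'$ and $(x+A)\star(y+A)=(x\star y)+A$ for $\star\in\Omega_2'$. The unary operations descend at once: if $x'=x+a$ with $a\in A$ then $\omega(x')=\omega(x)+\omega(a)$ by condition (d), and $\omega(a)\in A$ since $A$ is a subobject. The crux is well-definedness of $\star$: writing $x'=x+a$, $y'=y+b$ with $a,b\in A$ and expanding by distributivity in both variables yields
\[
x'\star y'=x\star y+x\star b+a\star y+a\star b,
\]
where $a\star y\in A$ and $a\star b\in A$ by (2), while $x\star b=b\star^{\circ}x\in A$ by (2) applied to $\star^{\circ}\in\Omega_2'$; as $A$ is a subgroup the three correction terms sum to an element of $A$, so $x'\star y'\in(x\star y)+A$. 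Once all operations are defined, the identities in $\E$ transfer to $X/A$ automatically since the projection $\pi\colon X\to X/A$ is a surjection preserving every operation, so $X/A\in\C$; by construction $\pi$ is a morphism and $\Ker\pi=A$, exhibiting $A$ as an ideal.

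I expect the main obstacle to be exactly the bookkeeping in the displayed expansion. One must invoke the closure of $\Omega_2'$ under $\star\mapsto\star^{\circ}$ in order to upgrade the one-sided hypothesis $a\star x\in A$ to closure on the other side ($x\star b\in A$), and one must respect the order of the summands, since the underlying group is not assumed abelian; the rest of the verification is formal.
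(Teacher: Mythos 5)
Your proof is correct; note that the paper itself gives no proof of this statement, quoting it directly from Orzech \cite[Theorem 1.7]{Orz}, and your argument is precisely the standard one from that source: kernels are normal and absorb $\star$ (via $0\star y=0$), and conversely the quotient $X/A$ is promoted to an object of $\C$, with well-definedness of the binary operations resting exactly on the closure of $\Omega_2'$ under $\star\mapsto\star^{\circ}$ as you observe. Both your preliminary lemma $0\star y=0$ and the coset expansion $x'\star y'=x\star y+x\star b+a\star y+a\star b$ check out, so there is nothing to correct.
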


We define the category of topological groups with operations as follows.

In the rest of the paper $\TC$ will denote the category of topological groups with a set $\Omega$ of continuous operations  and with a set $\E$ of identities such  that $\E$ includes the group laws such that the conditions (a)-(d) of Section 2 are satisfied.

Such a category is called a {\em  category of topological groups with operations}.

A {\em  morphism} between any two objects of $\TC$ is a continuous group homomorphism, which preserves the operations in $\Omega_1'$ and $\Omega_2'$.

The categories of topological groups,  topological rings  and  topological  $R$-mo\-du\-les are examples of categories of  topological groups with operations.

The internal category in $\C$ is defined in \cite{Por} as follows. We  follow the notations of Section 1 for groupoids.
\begin{definition}\label{Internalgpd}  An {\em internal category} $C$  in $\C$  is a category in which the initial and final point maps $d_0,d_1\colon C\rightrightarrows
	C_0$, the object inclusion map  $\epsilon\colon C_0\rightarrow C$
	and the partial composition $\circ\colon C_{d_1}\times_{d_0}
	C\rightarrow C,(a,b)\mapsto a\circ b$ are the morphisms in the category
	$\C$.\end{definition}

Note that since $\epsilon$ is a morphism in $\C$,
$\epsilon(0)=0$ and that the operation $\circ$ being a morphism
implies that for all $a,b,c,d\in C$ and $\star\in \Omega_2$,
\begin{align}\label{1}
	(a\star b)\circ(c\star d)=(a\circ c)\star(b\circ d)
\end{align}
whenever
one side makes sense. This is called the {\em interchange law} \cite{Por} .

We also note from \cite{Por} that  any internal category in
$\C$ is an internal groupoid since, given $a\in C$,
$a^{-1}=\epsilon d_1(a)-a+\epsilon d_0(a)$ satisfies $a^{-1}\circ a=\epsilon d_1(a)$ and
$a\circ a^{-1}=\epsilon d_0(a)$. So we use the term {\em internal groupoid}  rather than  internal category and write  $G$ for an internal groupoid.  For the category of internal groupoids in $\C$ we use   the same notation $\Cat(\C)$ as in \cite{Por}.  Here a {\em morphism}  $f\colon H\rightarrow G$ in $\Cat(\C)$  is the morphism of underlying groupoids and a morphism in $\C$.

In particular if $\C$ is the  category of groups, then an internal groupoid  $G$ in $\C$
becomes a  group-groupoid and in the case where $\C$ is the category of rings, an internal groupoid in $\C$ is a ring object in the category of groupoids \cite{Mu2}.

\begin{remark} \label{Remcorfromdef}  We emphasize the  following points from  Definition \ref{Internalgpd}:
	
	\item (i)
	By Definition \ref{Internalgpd} we know  that in an internal groupoid $G$ in $\C$, the initial and final point maps $d_0$ and $d_1$, the object inclusion  map $\epsilon$ are the morphisms in $\C$ and the interchange law  (\ref{1}) is satisfied. Therefore in an internal groupoid $G$,  the unary operations are endomorphisms of the underlying groupoid of $G$ and the binary operations are  morphisms from the underlying groupoid of  $G\times G$ to the one of $G$.

	\item  (ii) Let $G$ be an internal groupoid in $\C$ and $0\in G_0$ the identity element. Then $\Ker d_0=St_G0$, called in \cite{Br} the {\em transitivity component} or {\em connected component} of $0$, is also an internal groupoid which is also an ideal of $G$.
\end{remark}

The following example plays the key rule in the proofs of  Corollary \ref{Cortopgpoper} and  Theorem \ref{eqcat}.

\begin{example}\label{Teotopgroupwithoper}  \label{fungdtgp}
	If $X$ is  an object of $\TC$, then the fundamental groupoid $\pi(X)$ is an internal groupoid. Here the operations on    $\pi (X)$ are induced  by those of  $X$.   The details  are straightforward.
\end{example}

\begin{definition}\label{Defofcovemorpigd}  A morphism  $f\colon
	H\rightarrow G$ of internal groupoids in $\C$ is called a {\em cover}
	(resp. {\em universal cover}) if it is a covering morphism
	(resp. universal covering morphism) on the underlying groupoids.
\end{definition}

Since by Remark \ref{Remcorfromdef} (ii) for an internal groupoid $G$ in $\C$, the star  $St_G 0$ is also an internal groupoid, we have that   if $f\colon H\rightarrow G$ is a covering morphism of internal groupoids, then the restriction of $f$ to the  stars $St_H0\rightarrow St_G0$  is an isomorphism in $\C$.

A morphism $p\colon \widetilde{X}\rightarrow X$ in $\TC$ is called a {\em covering morphism} of topological groups with operations if it is a covering map on the underlying space.
\begin{example} \label{Examcovmorpintgpd}  If $p\colon \widetilde{X}\rightarrow X$ is a covering morphism
	of  topological groups with operations, then the induced morphism
	$\pi(p)\colon \pi(\widetilde{X})\rightarrow \pi(X)$ is a covering
	morphism of internal groupoids in $\C$.
\end{example}

In  \cite{BM1, Mu1}  an action of a group-groupoid $G$ on a group $X$ is defined.  We now  define an action of an internal groupoid on a group with operations as follows.

\begin{definition} Let  $G$ be an internal groupoid in $\C$ and $X$ an object of $\C$.  If the underlying groupoid of $G$ acts on the underlying set of  $X$ in the sense of  Definition~\ref{gpdactionSet} so that the maps  $\theta\colon X\rightarrow G_0$ and $\varphi\colon X_\theta \times _{d_0} G\rightarrow X, (x,a)\mapsto xa$ in the groupoid action are morphisms in $\C$, then we say that the internal groupoid $G$ {\em acts} on the group with operations $X$ via $\theta$.
\end{definition}
We write $(X,\theta,\varphi)$ for an action. Here note that  $\varphi\colon X_\theta\times _{d_0} G\rightarrow X, (x,a)\mapsto xa$ is a morphism in $\C$ if and only if
\begin{align}\label{2}
	(x\star y)(a\star b)=(xa)\star (yb)
\end{align}
for $x,y\in X$; $a,b\in G$ and $\star\in \Omega_2$  whenever  one side is defined.

\begin{example}\label{Examaction} Let  $G$ and $\widetilde{G}$ be internal groupoids in $\C$ and let   $p\colon \widetilde{G}\rightarrow G$   be a covering  morphism of internal groupoids. Then the internal groupoid  $G$ acts on the group with operations $X={\widetilde{G}}_0$ via $p_0\colon X\rightarrow G_0$ assigning to $x\in X$ and $a\in St_Gp(x)$ the target of the unique lifting $\tilde{a}$   in $\widetilde{G}$ of $a$ with source $x$. Clearly, the underlying groupoid of $G$ acts on the underlying set   and  by evaluating the  uniqueness of the lifting,  condition (\ref{2})  is satisfied for  $x,y\in X$ and  $a,b\in G$ whenever one side is defined.
\end{example}

We use  Theorem \ref{Theoactiongpdcover} to prove the following result for internal groupoids.
\begin{theorem} \label{IGdactsonX} Let  $G$ be an internal groupoid in $\C$ with transitive underlying groupoid, and  $0\in G_0$ the identity element of  the additive operation. Let  $G(0)$  be the object group at $0\in G_0$, which is a group with operations and  $C$ a subobject  of  $G(0)$. Suppose that  $X$ is  the set of  cosets $C\circ a=\{c\circ a\mid c\in\ C\}$ for $a$ in $St_G 0$.  Then $X$ becomes a group with operations such that  the internal  groupoid $G$ acts on  $X$ as a group with operations.
\end{theorem}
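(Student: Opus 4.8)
The plan is to build both structures on top of the purely groupoid-theoretic picture already supplied by Theorem~\ref{Theoactiongpdcover} (applied with $x=0$): that result gives the set of cosets $X$, the map $\theta\colon X\to G_0$, $C\circ a\mapsto d_1(a)$, and the action $\varphi(C\circ a,g)=C\circ(a\circ g)$ satisfying Definition~\ref{gpdactionSet}. So the only genuinely new content is (i) to put a group-with-operations structure on $X$, and (ii) to check that the maps $\theta$ and $\varphi$ are morphisms of $\C$. Before defining anything I would record that $St_G0$ is closed under every operation: if $a,b\in St_G0$ then $d_0(a)=d_0(b)=0$, and since $d_0$ is a morphism of $\C$ we get $d_0(a\star b)=0\star0=0$ and $d_0(\omega(a))=\omega(0)=0$ for $\star\in\Omega_2$, $\omega\in\Omega_1$, using $0\star0=0$ and $\omega(0)=0$ (these follow by setting the relevant variables to $0$ in the identities (c)--(d)). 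Thus $a\star b,\ \omega(a)\in St_G0$, and the attempted definitions $(C\circ a)\star(C\circ b)=C\circ(a\star b)$ and $\omega(C\circ a)=C\circ\omega(a)$ at least land in $X$.

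The well-definedness of these operations is the heart of the argument, and I expect it to be the main obstacle; everything turns on the interchange law~(\ref{1}) together with the closure of the subobject $C$ under the operations. Suppose $C\circ a=C\circ a'$ and $C\circ b=C\circ b'$, say $a'=c_1\circ a$ and $b'=c_2\circ b$ with $c_1,c_2\in C$. For a binary operation $\star\in\Omega_2$ the interchange law~(\ref{1}) gives
\[
a'\star b'=(c_1\circ a)\star(c_2\circ b)=(c_1\star c_2)\circ(a\star b),
\]
and since $C$ is a subobject of $G(0)$ it is closed under $\star$, so $c_1\star c_2\in C$ and hence $C\circ(a'\star b')=C\circ(a\star b)$. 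The unary case is identical: by Remark~\ref{Remcorfromdef}(i) each $\omega\in\Omega_1$ is an endomorphism of the underlying groupoid, so $\omega(c_1\circ a)=\omega(c_1)\circ\omega(a)$ with $\omega(c_1)\in C$, giving $C\circ\omega(a')=C\circ\omega(a)$. This shows the operations on $X$ are well defined.

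It then remains to transport the algebraic axioms and to verify the two morphism conditions. The surjection $\pi\colon St_G0\to X$, $a\mapsto C\circ a$, satisfies $\pi(a\star b)=\pi(a)\star\pi(b)$ and $\pi(\omega(a))=\omega(\pi(a))$ by construction; the additive identity of $X$ is $C\circ 0=C$ and the additive inverse of $C\circ a$ is $C\circ(-a)$, while every identity of $\E$ descends to $X$ because it is an equational law valid in the group with operations $St_G0$ and preserved by the surjective operation-preserving map $\pi$. Hence $X$ is a group with operations. Finally, $\theta$ is a morphism of $\C$ since $d_1$ is, e.g. $\theta((C\circ a)\star(C\circ b))=d_1(a\star b)=d_1(a)\star d_1(b)$; and to see that $\varphi$ is a morphism of $\C$ I check condition~(\ref{2}). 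For $x=C\circ a$, $y=C\circ b$ and composable $g,h$, the interchange law~(\ref{1}) yields $(a\star b)\circ(g\star h)=(a\circ g)\star(b\circ h)$, whence
\[
(x\star y)(g\star h)=C\circ\bigl((a\circ g)\star(b\circ h)\bigr)=(xg)\star(yh),
\]
which is exactly~(\ref{2}). Therefore $(X,\theta,\varphi)$ is an action of the internal groupoid $G$ on the group with operations $X$, completing the proof.
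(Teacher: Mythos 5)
Your proof is correct and takes essentially the same approach as the paper: operations on $X$ are defined on coset representatives, well-definedness is deduced from the interchange law~(\ref{1}) together with closure of the subobject $C$ under the operations, the identities of $\E$ descend to $X$ from $St_G 0$, and the action condition~(\ref{2}) is verified via the interchange law, with the groupoid-level action imported from Theorem~\ref{Theoactiongpdcover} exactly as in the paper. The only differences are cosmetic: you make explicit some steps the paper treats as immediate (closure of $St_G 0$ under the operations, and that $\theta$ is a morphism in $\C$), and you phrase well-definedness via $a'=c_1\circ a$ rather than via $a'\circ a^{-1}\in C$.
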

\begin{proof} Define $2$-ary operations  on  $X$ by
	\[(C\circ a)\tilde{\star}(C\circ b)=C\circ (a\star b)\] for $C\circ a,~ C\circ b\in X$.  We now prove that these  operations are well defined.  If  $C\circ a,~C\circ b\in X$, then  $a,b\in St_G 0$, $a\star b\in St_G 0$ and so  $C\circ (a\star b)\in X$.  Further if $C\circ a=C\circ a'$ and $C\circ b=C\circ b'$, then
	$a'\circ a^{-1}, b'\circ b^{-1} \in C$ and  by the interchange
	law  (\ref{1}) in $G$     \[(a'\star b')\circ (a\star b)^{-1}= (a'\star b')\circ (a^{-1}\star b^{-1})=(a'\circ a^{-1})\star (b'\circ b^{-1}).\]  Since  $C$ is a subobject, we have that  $(a'\star b')\circ (a\star b)^{-1}\in C$ and therefore
	$C\circ(a\star b)=C\circ(a'\star b')$.
	
	Define $1$-ary operations  on  $X$ by
	\[\tilde{\omega}(C\circ a)=C\circ \omega(a).\] If $C\circ a=C\circ b$, then $b\circ a^{-1}\in C$ and \[\omega(b\circ a^{-1})=\omega (b)\circ \omega (a^{-1})=\omega(b)\circ (\omega a)^{-1}\in C.\]
	So $C\circ \omega(a)=C\circ \omega(b)$  and hence these operations are well defined.  Since $G$ is a group with operations it has a set  $E$  of identities including group axioms.  So, the same type of identities including group axioms  and the other axioms (a)-(d)  of  Section 2  are satisfied for  $X$.  Therefore $X$ becomes a group with operations. We know from the proof of Theorem \ref{Theoactiongpdcover} that the underlying groupoid $G$ acts on the set $X$. In addition to these, it is straightforward to show  that by the interchange law (\ref{1}) in $G$, the condition
	\[((C\circ a)\tilde{\star}(C\circ b))(g\star h)=((C\circ a)\circ g)\tilde{\star} ((C\circ b)\circ h)\]
	is satisfied for $g,h\in G$ whenever the right side is defined. \end{proof}

\begin{theorem}\label{Teointernalgpd}  Let $G$ be an internal category in $\C$ and $X$ an object of $\C$. Suppose that the internal groupoid $G$ acts on the group with operations $X$. Then the action groupoid $G\ltimes X$   defined in Section 1 becomes  an internal groupoid in $\C$ such that the projection $p\colon G\ltimes X\rightarrow G$ is a morphism of internal groupoids.\end{theorem}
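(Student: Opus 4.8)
The plan is to first equip the morphism set of $G \ltimes X$ with the structure of an object of $\C$, then check that the four structure maps $d_0,d_1,\epsilon,\circ$ are morphisms in $\C$, and finally verify that the projection $p$ respects all of this. Recall that the elements of $G \ltimes X$ are the pairs $(a,x)$ with $a \in G$, $x \in X$ and $d_0(a)=\theta(x)$, so the underlying set of morphisms is (up to reordering of coordinates) the pullback $X_\theta \times_{d_0} G$ already appearing in the action $\varphi$. Since $\theta$ and $d_0$ are morphisms in $\C$, this pullback is a subobject of the product $X \times G$ and hence is itself an object of $\C$, with operations computed componentwise: for $\star \in \Omega_2$ I would set $(a,x)\star(b,y)=(a\star b,\,x\star y)$ and for $\omega\in\Omega_1$ set $\omega(a,x)=(\omega a,\,\omega x)$. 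The pullback is closed under these because $d_0(a\star b)=d_0(a)\star d_0(b)=\theta(x)\star\theta(y)=\theta(x\star y)$, and likewise for the unary operations, so all the identities of $\E$ pass to the subobject.

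Next I would verify that the groupoid structure maps are morphisms in $\C$. The source map $d_0\colon (a,x)\mapsto x$ is a projection, hence a morphism. The target map $d_1\colon (a,x)\mapsto xa=\varphi(x,a)$ is a morphism precisely because $\varphi$ is: the binary case is exactly condition (\ref{2}), since $d_1((a,x)\star(b,y))=(x\star y)(a\star b)=(xa)\star(yb)=d_1(a,x)\star d_1(b,y)$, and the unary case follows from $\varphi$ preserving $\Omega_1$. The object inclusion $\epsilon\colon x\mapsto(\epsilon\theta(x),x)$ is assembled from the $\C$-morphisms $\epsilon$ and $\theta$ together with the identity on $X$, so it too is a morphism in $\C$.

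The main obstacle, and the heart of the proof, is showing that the partial composition $\circ$ is a morphism in $\C$, i.e. that $G\ltimes X$ satisfies the interchange law (\ref{1}). Its domain is the pullback $(G\ltimes X)_{d_1}\times_{d_0}(G\ltimes X)$, which is an object of $\C$ by the previous step. For composable pairs $(a,x),(b,y)$ and $(c,u),(d,v)$ (so $y=xa$ and $v=uc$) I would first check that $(a\star c,\,x\star u)$ and $(b\star d,\,y\star v)$ are again composable, which holds because $y\star v=(xa)\star(uc)=(x\star u)(a\star c)$ by (\ref{2}); then
\[((a,x)\star(c,u))\circ((b,y)\star(d,v))=\bigl((a\star c)\circ(b\star d),\,x\star u\bigr)=\bigl((a\circ b)\star(c\circ d),\,x\star u\bigr),\]
where the last equality is the interchange law (\ref{1}) in $G$, and this equals $((a,x)\circ(b,y))\star((c,u)\circ(d,v))$. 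The analogous identity for a unary $\omega$ uses that $\omega$ is an endomorphism of the underlying groupoid of $G$ (Remark \ref{Remcorfromdef}(i)). Since the underlying groupoid of $G\ltimes X$ is the action groupoid of the earlier definition, the groupoid axioms already hold, so $G\ltimes X$ is an internal category in $\C$, hence an internal groupoid by the remark following Definition \ref{Internalgpd}.

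Finally I would check that $p\colon G\ltimes X\rightarrow G$, given on objects by $p_0=\theta$ and on morphisms by $(a,x)\mapsto a$, is a morphism of internal groupoids. On morphisms $p$ is the projection to the $G$-coordinate, hence a morphism in $\C$, while $p_0=\theta$ is a morphism in $\C$ by hypothesis; and the groupoid-morphism identities $d_0p=p_0d_0$, $d_1p=p_0d_1$ (using action axiom (1) of Definition \ref{gpdactionSet}), $p\epsilon=\epsilon p_0$, and compatibility with $\circ$ are all immediate from the definitions.
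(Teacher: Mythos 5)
Your proof is correct and takes essentially the same approach as the paper's: componentwise operations on $G\ltimes X$ with the identities inherited from $G$ and $X$, and the structure maps checked to be morphisms in $\C$ via the interchange law (\ref{1}) and condition (\ref{2}). You simply supply details the paper leaves as ``straightforward'' (closure of the pullback $X_{\theta}\times_{d_0}G$ under the operations, the object inclusion, and the verification of $p$), and you correctly attach condition (\ref{2}) to the target map $d_1(a,x)=xa$, where the paper's proof, in a small slip, names $d_0$ instead.
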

\begin{proof} We first prove that the action groupoid $\widetilde{G}=G\ltimes X$ is a group with operations in $\C$.   For this,  $1$-ary operations are defined by $\tilde{\omega}(a,x)=(\omega(a),\omega(x))$ for $\Omega_1$ and  $2$-ary  operations  are  defined by $(a,x)\tilde{\star} (b,y)=(a\star b,x\star y)$ for $\star\in \Omega_2$.    Then the  axioms (a)-(d) of Section 2  for  these operations defined on $\widetilde{G}$ are satisfied.
	Since $G$ and $X$  are  objects of $\C$, they have the same type of identities including group axioms.  So, the same type of identities including group axioms are satisfied for  $\widetilde{G}$.
	
	The initial and final point maps $d_0,d_1\colon \widetilde{G}\rightrightarrows
	X$, the object inclusion map  $\epsilon\colon X\rightarrow \widetilde{G}$
	and the partial composition $\circ\colon \widetilde{G}_{d_1}\times_{d_0}
	\widetilde{G}\rightarrow \widetilde{G},(\tilde{a},\tilde{b})\mapsto \tilde{a}\circ \tilde{b}$ are the morphisms in the category $\C$ because the same maps for $G$ are  morphisms in $\C$.
	
	By the  interchange law (\ref{1}) in $G$,  the partial composition  $\circ\colon \widetilde{G}_{d_1}\times_{d_0}
	\widetilde{G}\rightarrow \widetilde{G},(\tilde{a},\tilde{b})\mapsto \tilde{a}\circ \tilde{b}$ is a morphism in $\C$ and, by condition (\ref{2}) in $G$, the final point map $d_0\colon \widetilde{G}\rightarrow
	X$ becomes a morphism in $\C$.  The rest of the proof is straightforward.
\end{proof}

\begin{definition} Let $\wtilde G$  be   a groupoid,  $G$ an internal  groupoid in
	$\C$ and $0\in G_0$  the   identity of the additive operation.  Suppose that   $p\colon(\wtilde G,\tilde{0})\rightarrow (G,0)$ is a
	covering morphism of groupoids.  We say that the internal groupoid structure of $G$ {\em lifts} to $\wtilde G$ if $\wtilde G$ is an internal groupoid in $\C$ such that $p$ is a morphism of internal groupoids in
	$\C$.
\end{definition}

We use Theorems \ref{IGdactsonX} and \ref{Teointernalgpd}  to prove that the internal groupoid structure of an internal category  $G$ lifts  to  a covering groupoid.

\begin{theorem}\label{groupoidgrouplift} Let $\wtilde G$ be a groupoid and $G$ an internal groupoid in $\C$ whose underlying groupoid is transitive.  Suppose that  $p\colon(\wtilde G, \tilde{0})\rightarrow (G,0)$ is a covering morphism of underlying groupoids such that the characteristic group $C$ of $p$ is a subobject of $G(0)$. Then the internal groupoid structure of
	$G$ lifts to $\wtilde G$ with identity $\tilde 0$.
\end{theorem}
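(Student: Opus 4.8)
The plan is to recognize $\widetilde{G}$ as an isomorphic copy of the canonical coset cover produced in Theorem \ref{Theoactiongpdcover} and then to transport the internal groupoid structure across that isomorphism, so that essentially all the real work is discharged by the two preceding theorems.

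First I would build the model cover. Since the underlying groupoid of $G$ is transitive, $0\in G_0$ is an object, and by hypothesis $C$ is a subobject of the object group $G(0)$; thus Theorem \ref{IGdactsonX} applies and the set $X$ of cosets $C\circ a$ with $a\in St_G 0$ is a group with operations on which $G$ acts as a group with operations. Feeding this action into Theorem \ref{Teointernalgpd}, the action groupoid $\widetilde{G}_C=G\ltimes X$ is an internal groupoid in $\C$ and the projection $q\colon\widetilde{G}_C\to G$ is a morphism of internal groupoids; by Theorem \ref{Theoactiongpdcover}, $q$ is a transitive covering morphism with characteristic group exactly $C$, based at the coset $\tilde{x}=C$.

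Next I would identify $\widetilde{G}$ with $\widetilde{G}_C$ and transport the structure. Both $p\colon(\widetilde{G},\tilde{0})\to(G,0)$ and $q\colon(\widetilde{G}_C,\tilde{x})\to(G,0)$ are transitive covering morphisms with the same characteristic group $C$, so Corollary \ref{CorLiftcov} yields a groupoid isomorphism $r\colon(\widetilde{G},\tilde{0})\to(\widetilde{G}_C,\tilde{x})$ with $qr=p$. I would then define the operations of $\C$ on $\widetilde{G}$ and on its object set by pulling them back along $r$, for instance $\tilde{a}\,\tilde{\star}\,\tilde{b}=r^{-1}\bigl(r(\tilde{a})\,\tilde{\star}\,r(\tilde{b})\bigr)$ for $\star\in\Omega_2$, and likewise for the unary operations and $0$. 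Because $r$ is a groupoid isomorphism and $\widetilde{G}_C$ is an internal groupoid, the structure maps $d_0,d_1,\epsilon,\circ$ on $\widetilde{G}$ become morphisms in $\C$ for these pulled-back operations, the interchange law (\ref{1}) and every identity in $\E$ are inherited, and $r$ upgrades to an isomorphism of internal groupoids. Hence $p=qr$ is a composite of morphisms of internal groupoids and is therefore itself one, with $\epsilon(\tilde{0})=\tilde{0}$, which is exactly the claim that the internal groupoid structure of $G$ lifts to $\widetilde{G}$.

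The step I expect to be the main obstacle is the applicability of Corollary \ref{CorLiftcov}, which is stated only for transitive covering morphisms. Since the characteristic group $C=p(\widetilde{G}(\tilde{0}))$ governs only the component of $\tilde{0}$, one must ensure that $\widetilde{G}$ is transitive (replacing it by the transitivity component of $\tilde{0}$ if necessary, on which $p$ restricts to a transitive cover with the same characteristic group). This transitivity is in fact essential for the conclusion itself, because the binary operations of an internal groupoid must combine arbitrary pairs of elements and so cannot respect a decomposition into several components. Once transitivity is secured the identification $r$ is forced and the transport is purely formal, the genuine content residing in Theorems \ref{IGdactsonX} and \ref{Teointernalgpd}.
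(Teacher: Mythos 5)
Your proposal matches the paper's proof essentially step for step: the paper likewise invokes Theorem \ref{Theoactiongpdcover} to obtain $q\colon(\widetilde{G}_C,\tilde{x})\rightarrow (G,0)$ with characteristic group $C$, applies Corollary \ref{CorLiftcov} to conclude that $p$ and $q$ are isomorphic, and then discharges all the structural work on $\widetilde{G}_C=G\ltimes X$ via Theorems \ref{IGdactsonX} and \ref{Teointernalgpd}. Your explicit transport of the operations along the isomorphism $r$, and your caveat that $\widetilde{G}$ must be transitive for Corollary \ref{CorLiftcov} to be applicable, are details the paper leaves implicit (it, too, tacitly assumes the covering groupoid transitive when declaring $p$ and $q$ isomorphic), so your argument does not diverge from the published one.
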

\begin{proof} Let $C$ be the characteristic group of $p\colon(\wtilde
	G,\tilde{0})\rightarrow (G,0)$.  Then by
	Theorem \ref{Theoactiongpdcover} we have a covering morphism of groupoids
	$q\colon(\wtilde{G}_{C},\tilde{x})\rightarrow (G,0)$ which has the
	characteristic group $C$. So by Corollary \ref{CorLiftcov} the
	covering morphisms $p$ and $q$ are isomorphic. Therefore  it is sufficient to prove that the internal groupoid  structure of  $G$ lifts to $\widetilde{G}_C= G\ltimes X$.  By Theorem \ref{IGdactsonX},  $X$ is a group with operations and  the internal groupoid $G$ acts on $X$. By Theorem \ref{Teointernalgpd}, the internal groupoid structure of $G$ lifts to $\widetilde{G}_C= G\ltimes X$. \end{proof}

From Theorem \ref{groupoidgrouplift} we obtain the following corollary.
\begin{corollary} \label{Cortopgpoper} Let  $X$ be an object of $\TC$   whose underlying space is connected. Suppose that $\wtilde X$ is  a simply connected topological space and   $p\colon\wtilde X\rightarrow X$ is a
	covering  map from $\wtilde X$ to the underlying topology of $X$.  Let $0$ be the identity
	element of the additive group of $X$ and  $\tilde 0\in\wtilde X$
	such that  $p(\tilde 0)=0$. Then $\wtilde X$ becomes a
	topological group with operations such that  $\tilde 0$  is the identity
	element of the group structure of $\wtilde X$ and  $p$ is a
	morphism of topological groups with operations.
\end{corollary}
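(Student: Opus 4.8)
The plan is to pass to fundamental groupoids and reduce the statement to Theorem~\ref{groupoidgrouplift}, treating the continuity of the lifted operations as a separate, topological step. Since $X$ is an object of $\TC$ whose underlying space is connected, Example~\ref{fungdtgp} shows that the fundamental groupoid $\pi(X)$ is an internal groupoid in $\C$: its underlying groupoid is transitive because $X$ is path-connected, its set of objects $\pi(X)_0$ is the group with operations $X$, and $0$ is the identity of the additive operation. The covering map $p\colon\wtilde X\rightarrow X$ induces, by the standard theory of covering spaces, a covering morphism of groupoids $\pi(p)\colon(\pi(\wtilde X),\tilde 0)\rightarrow(\pi(X),0)$.

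First I would compute the characteristic group of $\pi(p)$ at $\tilde 0$. Because $\wtilde X$ is simply connected, the object group $\pi(\wtilde X)(\tilde 0)$ is trivial, so the characteristic group $C=\pi(p)(\pi(\wtilde X)(\tilde 0))$ is the trivial subgroup $\{0\}$ of the object group $\pi(X)(0)$. One checks from axioms (c) and (d) of Section~2 that $0\star 0=0$ for $\star\in\Omega_2'$ and $\omega(0)=0$ for $\omega\in\Omega_1'$, so $\{0\}$ is indeed a subobject of the group with operations $\pi(X)(0)$. Thus the hypotheses of Theorem~\ref{groupoidgrouplift} are met with $G=\pi(X)$ and $\wtilde G=\pi(\wtilde X)$, and that theorem yields that the internal groupoid structure of $\pi(X)$ lifts to $\pi(\wtilde X)$ with identity $\tilde 0$; that is, $\pi(\wtilde X)$ is an internal groupoid in $\C$ and $\pi(p)$ is a morphism of internal groupoids in $\C$.

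Next I would read off the algebraic structure on the space itself. The set of objects of $\pi(\wtilde X)$ is precisely the underlying set of $\wtilde X$, so the group with operations structure carried by the objects of the internal groupoid $\pi(\wtilde X)$ endows $\wtilde X$ with the structure of a group with operations having $\tilde 0$ as additive identity; moreover the map of objects underlying $\pi(p)$, which is $p$ itself, is a morphism in $\C$. It remains only to promote this structure from $\C$ to $\TC$.

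The main obstacle is exactly this last, topological step: showing that each operation on $\wtilde X$ is continuous, so that $\wtilde X$ is an object of $\TC$ and $p$ is a morphism of $\TC$. Here the bookkeeping of Theorem~\ref{groupoidgrouplift} gives only the algebraic structure, so I would argue continuity directly through the lifting criterion for covering maps. For a binary operation $\star\in\Omega_2$, the composite $\star\circ(p\times p)\colon\wtilde X\times\wtilde X\rightarrow X$ is continuous, and since $\wtilde X\times\wtilde X$ is simply connected it lifts through the covering $p$ to a unique continuous map sending $(\tilde 0,\tilde 0)$ to $\tilde 0$; similarly each unary operation $\omega\in\Omega_1$ lifts continuously through $p$. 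By uniqueness of lifts these continuous maps coincide with the algebraically defined operations obtained above, since both cover the corresponding operation on $X$, both send the base point to $\tilde 0$, and $\wtilde X$ (resp. $\wtilde X\times\wtilde X$) is connected. Hence all operations on $\wtilde X$ are continuous, $\wtilde X$ is a topological group with operations with identity $\tilde 0$, and $p$ is a morphism in $\TC$, as required.
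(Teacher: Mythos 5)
Your reduction is exactly the paper's: you compute the trivial characteristic group from simple connectivity, get transitivity of $\pi(X)$ from (path) connectivity, apply Theorem \ref{groupoidgrouplift} to lift the internal groupoid structure of $\pi(X)$ to $\pi(\wtilde X)$, and read off a group with operations on the object set $\wtilde X$. You diverge only at the final, topological step. The paper disposes of continuity by citing Brown's lifted-topology result \cite[10.5.5]{Br}: the given topology on $\wtilde X$ is the lifted topology determined by the covering morphism $\pi(p)$, and the operations, being object maps of morphisms of covering groupoids over $\pi(X)$, are automatically continuous for it. You instead lift each operation directly through $p$ via the covering-space lifting criterion. That is a legitimate and more self-contained route, but your identification of the continuous lift with the algebraic operation contains a genuine flaw as written: the topological unique-lifting theorem compares two \emph{continuous} lifts agreeing at a point of a connected domain, whereas the algebraically defined operation on $\wtilde X$ is, at that stage, only a set map --- its continuity is precisely what is at stake, so the theorem you invoke does not apply. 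Indeed, set-theoretic lifts of $\star\circ(p\times p)$ through $p$ that agree at the base point are far from unique: away from the base point one may permute values arbitrarily within the fibers of $p$.

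The gap is repairable with a tool already in the paper, using data you already have. By Theorem \ref{groupoidgrouplift} together with Remark \ref{Remcorfromdef}(i), the lifted operation $\tilde{\star}$ is a morphism of groupoids from $\pi(\wtilde X)\times\pi(\wtilde X)$ to $\pi(\wtilde X)$ satisfying $\pi(p)\circ\tilde{\star}=\pi(\star)\circ(\pi(p)\times\pi(p))$, i.e.\ a lift through the covering morphism $\pi(p)$; and your continuous lift $m$ of $\star\circ(p\times p)$ induces $\pi(m)$, another such lift (after identifying $\pi(\wtilde X\times\wtilde X)$ with $\pi(\wtilde X)\times\pi(\wtilde X)$). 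Both are pointed at $(\tilde 0,\tilde 0)\mapsto\tilde 0$ (using $0\star 0=0$ and $m(\tilde 0,\tilde 0)=\tilde 0$), and the domain groupoid is transitive since $\wtilde X$ is path connected. The uniqueness clause of Theorem \ref{Liftcov} --- which requires no continuity hypothesis on the competing lift, only that it be a groupoid morphism --- then gives $\tilde{\star}=\pi(m)$, so on objects the algebraic operation coincides with the continuous map $m$; the unary case is identical. With this one-line patch your argument is complete, and it stands as a sound, slightly more hands-on alternative to the paper's appeal to the lifted topology; note that both your lifting criterion and the paper's \cite[10.5.5]{Br} tacitly assume the usual local niceness (local path-connectedness) of $X$.
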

\begin{proof}
	Since $p\colon\wtilde X\rightarrow X$ is a covering map, the induced morphism $\pi(p)\colon\pi(\wtilde X)\rightarrow \pi(X)$ becomes  a covering morphism of  groupoids  with a trivial  characteristic group.  Since $X$ is a topological group with operations,
	by Example  \ref{Teotopgroupwithoper} $\pi(X)$ is an  internal groupoid  and since $X$ is path connected, the groupoid $\pi(X)$ is transitive.   So by  Theorem \ref{groupoidgrouplift}, the internal groupoid
	structure of $\pi(X)$ lifts to   $\pi (\wtilde X)$.  So, we have the structure of a group with operations on  $\widetilde{X}$. By \cite[10.5.5]{Br} the  topology on $\widetilde{X}$  is the {\em lifted topology} obtained by  the covering morphism $\pi
	(p)\colon\pi(\wtilde X)\rightarrow \pi(X)$
	and the  group operations are continuous with this topology.   So, $\widetilde{X}$ becomes a  topological group with operations.
\end{proof}

\section{The equivalence of the categories}

Let $X$ be an object of $\TC$. So, by Example \ref{Teotopgroupwithoper}, $\pi(X)$ is an internal groupoid.  Then we have a  category
${\Cov}_{\TC}/X$ of covers of $X$ in the category $\TC$ of   topological groups with operations and a category
${\Cov}_{\Cat(\C)}/\pi(X)$ of covers of $\pi(X)$ in the category $\Cat(\C)$  of internal groupoids in  $\C$.

\begin{theorem}\label{eqcat} Let $X$ be an object of $\TC$  such that the underlying topology of $X$ has a simply connected cover. Then the categories
	${\Cov}_{\TC}/X$ and ${\Cov}_{\Cat(\C)}/\pi(X)$ are equivalent.
\end{theorem}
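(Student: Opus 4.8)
The plan is to exhibit a pair of functors between $\Cov_{\TC}/X$ and $\Cov_{\Cat(\C)}/\pi(X)$ and to show that they are mutually quasi-inverse, upgrading the classical equivalence between covering maps of a space and covering morphisms of its fundamental groupoid to respect the group-with-operations structure. The first functor $P\colon \Cov_{\TC}/X\rightarrow \Cov_{\Cat(\C)}/\pi(X)$ is the fundamental-groupoid functor. On objects it sends a cover $p\colon \wtilde X\rightarrow X$ in $\TC$ to $\pi(p)\colon \pi(\wtilde X)\rightarrow \pi(X)$; by Example \ref{Examcovmorpintgpd} this is a covering morphism of internal groupoids in $\C$, and by Example \ref{Teotopgroupwithoper} the operations induced on $\pi(\wtilde X)$ and $\pi(X)$ make it a morphism in $\Cat(\C)$, so $P$ indeed lands in $\Cov_{\Cat(\C)}/\pi(X)$. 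On a morphism $f$ of covers over $X$ one sets $P(f)=\pi(f)$; functoriality and preservation of composition are immediate from those of $\pi$.

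For the reverse functor $Q\colon \Cov_{\Cat(\C)}/\pi(X)\rightarrow \Cov_{\TC}/X$, I would start from a covering morphism $q\colon \wtilde G\rightarrow \pi(X)$ of internal groupoids and take the object set $\wtilde G_0$, which is a group with operations, together with the induced morphism $q_0\colon \wtilde G_0\rightarrow \pi(X)_0=X$ in $\C$. I equip $\wtilde G_0$ with the lifted topology of \cite[10.5.5]{Br} determined by $q$, so that $q_0$ becomes a covering map on underlying spaces. By exactly the argument used in the proof of Corollary \ref{Cortopgpoper}, the operations of $\C$ are continuous for this lifted topology, whence $\wtilde G_0$ is a topological group with operations and $q_0\colon \wtilde G_0\rightarrow X$ is an object of $\Cov_{\TC}/X$. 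On a morphism $\tilde h$ of covers over $\pi(X)$ I take its restriction $\tilde h_0$ to objects, which is continuous for the lifted topologies and preserves the operations, hence is a morphism in $\TC$; this defines $Q$ functorially.

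It then remains to produce natural isomorphisms $QP\cong \mathrm{id}$ and $PQ\cong \mathrm{id}$. For $QP$, given $p\colon \wtilde X\rightarrow X$ the object set of $\pi(\wtilde X)$ is the space $\wtilde X$ itself and the lifted topology recovers the original topology of $\wtilde X$, so $QP(p)=p$ canonically. For $PQ$, given $q\colon \wtilde G\rightarrow \pi(X)$ one must identify $\pi(\wtilde G_0)$ with $\wtilde G$ as covers of $\pi(X)$. Here the hypothesis that the underlying topology of $X$ admits a simply connected cover guarantees that $X$ is connected and semi-locally simply connected, so $\pi(X)$ is transitive and the lifting theory of Theorem \ref{Liftcov} and Corollary \ref{CorLiftcov} applies: a transitive covering morphism of $\pi(X)$ is determined, up to a unique isomorphism over $\pi(X)$, by its effect on objects together with the star bijections. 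This furnishes a canonical comparison morphism $\pi(\wtilde G_0)\rightarrow \wtilde G$ of the underlying groupoids over $\pi(X)$.

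I expect the main obstacle to be verifying that this comparison isomorphism $\pi(\wtilde G_0)\cong \wtilde G$ is in fact a morphism in $\Cat(\C)$, that is, that it respects all the operations of $\Omega$, and that both comparison families are natural in the relevant cover. Once the underlying topological and groupoid-theoretic equivalence is in hand, the compatibility with the $\C$-structure should follow because every construction used---the lifted topology, the functor $\pi$, and the star bijections---was arranged to preserve the operations, the decisive input being the interchange law (\ref{1}) and the action condition (\ref{2}), used precisely as in the proofs of Theorem \ref{IGdactsonX} and Theorem \ref{Teointernalgpd}. Checking naturality and the operation-compatibility of the comparison maps is the place where the argument must be carried out carefully rather than quoted from the classical theory.
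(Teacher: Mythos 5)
Your proposal follows essentially the same two-functor strategy as the paper: the paper likewise takes the fundamental groupoid functor $\pi$ one way (via Example \ref{Examcovmorpintgpd}) and, in the other direction, equips $\wtilde X=\wtilde G_0$ with the lifted topology of \cite[10.5.5]{Br}. Three points of comparison are worth recording. First, where you derive the comparison $\pi(\wtilde G_0)\cong\wtilde G$ from Theorem \ref{Liftcov} and Corollary \ref{CorLiftcov}, note that those results apply only to transitive, pointed covering morphisms, whereas an object of ${\Cov}_{\Cat(\C)}/\pi(X)$ need not have a transitive total groupoid; the clean route---and the one the paper takes---is that \cite[10.5.5]{Br} itself already supplies an isomorphism $\alpha\colon\wtilde G\rightarrow\pi(\wtilde X)$ of groupoids with $q=\pi(p)\,\alpha$, so no separate lifting argument (and no transitivity hypothesis on $\wtilde G$) is needed. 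Second, the paper sidesteps what you flag as the ``main obstacle'' (operation-compatibility of the comparison isomorphism) by \emph{transporting} the operations of $\wtilde G$ along $\alpha$ to $\pi(\wtilde X)$, so that $\alpha$ respects them by construction; the substantive check then becomes that the transported $\tilde{\star}$ and $\tilde{\omega}$ induce continuous operations on $\wtilde X$ via \cite[10.5.5]{Br}, which is exactly the continuity verification you delegate to the argument of Corollary \ref{Cortopgpoper}, so your instinct there is sound. Third, where you propose to verify $QP\cong\mathrm{id}$ and $PQ\cong\mathrm{id}$ directly, the paper instead concludes by invoking the established group-case equivalence ${\Cov}_{\TGr}/X\simeq{\Cov}_{\GrGd}/\pi(X)$ of \cite{BM1} together with a commuting square of forgetful functors; your direct verification is more self-contained, though as written both your sketch and the paper's leave the naturality checks at a comparable level of detail. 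Modulo replacing the Corollary \ref{CorLiftcov} step by the isomorphism furnished directly by \cite[10.5.5]{Br}, your proposal is a faithful, and in places more explicit, version of the paper's argument.
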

\begin{proof}
	Define a functor
	\[ \pi  \colon {\Cov}_{\TC}/X\rightarrow
	{\Cov}_{\Cat(\C)}/\pi(X) \] as follows: suppose that  $p\colon\wtilde
	X\rightarrow X$ is  a covering morphism of topological groups with operations. Then by Example \ref{Examcovmorpintgpd},  the induced morphism $\pi(p)\colon\pi
	(\wtilde X)\rightarrow \pi (X)$ is a morphism of internal groupoids
	and a covering morphism on the underlying groupoids which preserves the group operations. Therefore  $\pi(p)\colon\pi (\wtilde X)\rightarrow \pi (X)$
	becomes a covering morphism of internal groupoids.
	
	We now define another  functor
	\[ \eta \colon  {\Cov}_{\Cat(\C)} / \pi(X) \rightarrow {\Cov}_{\TC}/X\]
	as follows: suppose that  $q\colon\wtilde{G}\rightarrow
	\pi(X)$ is a covering morphism of  internal grou\-poids. By the lifted topology \cite[10.5.5]{Br} on $\wtilde
	X=\wtilde{G}_0$  there is an isomorphism $ \alpha\colon
	\wtilde{G}\rightarrow \pi(\widetilde{X})$ of groupoids  such that
	$p=O_q\colon \widetilde{X}\rightarrow X$ is a covering map and
	$q=\pi(p)~\alpha$.  Hence the group operations on
	$\wtilde{G}$ transport via $\alpha$ to $\pi(\widetilde{X})$ such that $\pi(\widetilde{X})$ is an internal groupoid.  So we have 2-ary  operations
	\[\tilde{\star}\colon \pi(\widetilde{X})\times \pi(\widetilde{X})\rightarrow \pi(\widetilde{X})\]
	such that $\pi(p)\circ \tilde{\star}=\pi(\star)\circ (\pi p\times \pi p)$, where $\star$'s are the 2-ary operations of $X$. By  \cite[10.5.5]{Br}  the operations $\tilde{\star}$ induce continuous 2-ary operations on $\widetilde{X}$. Similarly, there are 1-ary operations
	\[\tilde{\omega}\colon \pi(\widetilde{X})\rightarrow \pi(\widetilde{X})\]
	such that $(\pi p)\circ \tilde{\omega}=(\pi\omega)\circ \pi p$, where $\omega$'s are the 1-ary operations of $X$ and the operations $\tilde{\omega}$ induce continuous 1-ary operations on $\widetilde{X}$.  So $\widetilde{X}$ becomes a group with operations and by the lifted topology the operations are continuous.
	Hence  $\widetilde{X}$ becomes a topological group with operations.
	
	Since  the category $ {\Cov}_{\TGr}/X$ of covers of $X$ in the category of  topological groups is equivalent to the category ${\Cov}_{\GrGd}/\pi (X)$ of covers of  $\pi(X)$ in the category of   group-groupoids, by the
	following diagram the proof is completed
	\begin{equation*}
		\xymatrix{
			{\Cov}_{\TC}/X \ar[d] \ar[r]^>>>>{\pi} & {\Cov}_{\Cat(\C)}/\pi (X)  \ar[d] \\
			{\Cov}_{\TGr}/X   \ar[r]^>>>>{\pi}      & {\Cov}_{\GrGd}/\pi (X). }
	\end{equation*}
	The proof of theorem is complete.
\end{proof}

Let $G$ be an internal groupoid in $\C$.  Let    ${\Cov}_{\Cat(\C)}/G$  be the category of covers of $G$ in the category $\Cat(\C)$ of internal categories in $\C$. So the objects of ${\Cov}_{\Cat(\C)}/G$  are  the
covering morphisms $p\colon \widetilde{G}\rightarrow G$ over $G$ of internal groupoids and a morphism   from $p\colon \widetilde{G}\rightarrow G$ to $q\colon \widetilde{H}\rightarrow G$ is a
morphism $f\colon \widetilde{G}\rightarrow \widetilde{H}$ of internal groupoids, which becomes also a covering
morphism, such that $qf=p$.

Let  $\Act_{\Cat(\C)}/G$ be the category of internal groupoid actions of $G$ on  groups with operations. So, an object  of $\Act_{\Cat(\C)}/G$ is an  internal groupoid action $(X,\theta,\varphi)$ of $G$  and a morphism  from  $(X,\theta,\varphi)$ to  $(Y,\theta',\varphi')$ is a morphism   $f\colon X\!\rightarrow
X'$ of groups with operations such that $\theta=\theta'f$ and $f(xa)=(fx)a$ whenever $xa$ is defined, for any  objects $(X,\theta,\varphi)$ and  $(Y,\theta',\varphi')$ of  $\Act_{\Cat(\C)}/G$.

\begin{theorem} For an internal groupoid  $G$ in $\C$, the categories $\Act_{\Cat(\C)}/G$ and
	${\Cov}_{\Cat(\C)}/G$  are equivalent.\end{theorem}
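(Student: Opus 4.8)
The plan is to exhibit mutually inverse functors $\Gamma\colon \Act_{\Cat(\C)}/G \to {\Cov}_{\Cat(\C)}/G$ and $\Xi\colon {\Cov}_{\Cat(\C)}/G \to \Act_{\Cat(\C)}/G$, and then to check that the two composites are naturally isomorphic to the respective identity functors. The classical equivalence between covering morphisms of groupoids and groupoid actions supplies the underlying bijective correspondences; the real content here is that every step respects the operations of $\C$.

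First I would define $\Gamma$ on an object $(X,\theta,\varphi)$ to be the projection $p\colon G\ltimes X\to G$, $(a,x)\mapsto a$. By Theorem~\ref{Teointernalgpd} the action groupoid $G\ltimes X$ is an internal groupoid in $\C$ and $p$ is a morphism of internal groupoids; that $p$ is a covering morphism follows because for each $x\in X$ the restriction $St_{G\ltimes X}x\to St_G\theta(x)$, $(a,x)\mapsto a$, is a bijection with inverse $a\mapsto(a,x)$. On a morphism $f\colon(X,\theta,\varphi)\to(Y,\theta',\varphi')$ of actions I set $\Gamma(f)=G\ltimes f\colon(a,x)\mapsto(a,f(x))$; the hypotheses $\theta'f=\theta$ and $f(xa)=(fx)a$ guarantee that this is a well-defined morphism of internal groupoids over $G$ which is again a covering morphism, and functoriality is immediate.

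Next I would define $\Xi$ on a cover $p\colon\widetilde G\to G$ to be the action $(\widetilde G_0,p_0,\varphi)$ of Example~\ref{Examaction}, where $G$ acts on $\widetilde G_0$ by sending $(x,a)$ to the target of the unique lifting of $a\in St_G p_0(x)$ with source $x$; that example already records that this is an action of $G$ on $\widetilde G_0$ as a group with operations, condition~(\ref{2}) being forced by uniqueness of liftings. On a morphism $f\colon\widetilde G\to\widetilde H$ of covers over $G$ I set $\Xi(f)=f_0\colon\widetilde G_0\to\widetilde H_0$; the equation $qf=p$ gives $\theta'f_0=\theta$, and uniqueness of liftings together with $f$ being a morphism over $G$ gives $f_0(xa)=(f_0x)a$, so that $f_0$ is a morphism of actions.

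Finally I would produce the two natural isomorphisms. For $\Xi\Gamma$ note that $(G\ltimes X)_0=X$ and that the unique lifting of $a\in St_G\theta(x)$ at $x$ in $G\ltimes X$ is $(a,x)$, whose target is $xa$; hence the action recovered from $\Gamma(X,\theta,\varphi)$ is exactly $(X,\theta,\varphi)$, so $\Xi\Gamma$ is the identity. For $\Gamma\Xi$ I would define, for a cover $p\colon\widetilde G\to G$, the map $\psi\colon\widetilde G\to G\ltimes\widetilde G_0$ by $\psi(\tilde a)=(p(\tilde a),d_0(\tilde a))$, with inverse sending $(a,x)$ to the unique lifting of $a$ at $x$. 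The map $\psi$ is plainly a bijection commuting with $p$ and the projection, and a routine check shows it preserves groupoid composition and identities. The main obstacle — the only place where the structure of $\C$ genuinely enters — is verifying that $\psi$ is a morphism in $\C$, i.e. that it preserves each $\omega\in\Omega_1$ and $\star\in\Omega_2$; using that $p$ and $d_0$ are morphisms in $\C$ and that the operations on $G\ltimes\widetilde G_0$ are the componentwise ones $(a,x)\tilde\star(b,y)=(a\star b,x\star y)$ of Theorem~\ref{Teointernalgpd}, this reduces to the interchange law~(\ref{1}) in $G$ and condition~(\ref{2}) for the action. Checking that each $\psi$ is natural in the cover then completes the equivalence.
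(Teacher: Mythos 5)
Your proposal is correct and follows the same route as the paper: the functor $\Gamma$ built from Theorem~\ref{Teointernalgpd} via the action groupoid $G\ltimes X$, the inverse functor (the paper's $\Phi$, your $\Xi$) built from Example~\ref{Examaction}, and the two natural isomorphisms. You in fact supply more detail than the paper, which simply asserts that the natural equivalences follow; your explicit comparison map $\psi(\tilde a)=(p(\tilde a),d_0(\tilde a))$ is the standard one, and its preservation of the operations is even slightly easier than you suggest, since it follows directly from $p$ and $d_0$ being morphisms in $\C$ together with the componentwise operations on $G\ltimes\widetilde G_0$.
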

\begin{proof} By Theorem \ref{Teointernalgpd}   for an internal groupoid  action $(X,\theta,\varphi)$ of $G$, we have a
	morphism $p\colon G\ltimes X\rightarrow G$ of internal groupoids, which is a covering morphism on underlying groupoids.  This gives a functor \[\Gamma\colon
	\Act_{\Cat(\C)}/G\rightarrow {\Cov}_{\Cat(\C)}/G.\]
	
	Conversely, if $p\colon \widetilde{G}\rightarrow G$ is a covering morphism of
	internal groupoids, then by Example \ref{Examaction} we have an internal groupoid action.
	In this way we define a functor  \[\Phi\colon
	{\Cov}_{\Cat(\C)}/G\rightarrow \Act_{\Cat(\C)}/G.\]
	
	The natural equivalences $\Gamma\Phi\simeq 1$ and $\Phi\Gamma\simeq 1$ follow.
\end{proof}

\section{Covers of crossed modules in groups with operations}

The conditions of a crossed module in groups with operations are formulated in \cite[Proposition 2]{Por} as follows.
\begin{definition} \label{Defcrosmod} A {\em crossed module} in $\C$ is  $\alpha\colon A\rightarrow B$ is a
	morphism in $\C$, where $B$ acts on $A$ (i.e. we have a derived action in $\C$)  with the conditions for any $b \in B$, $a, a'\in A$, and $\star\in\Omega_2'$:
	\begin{enumerate}\item[]
		\begin{enumerate}
			\item [CM1~~]$\alpha(b \cdot a) = b + \alpha(a) - b$;
			\item [CM2~~]$\alpha(a)\cdot a'=a+a'-a$;
			\item [CM3~~] $\alpha(a)\star a'=a\star a'$;
			\item [CM4~~] $\alpha(b\star a)=b\star\alpha(a)$ and  $\alpha(a\star b)=\alpha(a)\star b$.
		\end{enumerate}
	\end{enumerate}
\end{definition}

A {\em morphism} from  $\alpha\colon A\rightarrow B$ to $\alpha'\colon A'\rightarrow B'$   is a pair
$f_1\colon A\rightarrow A'$ and  $f_2\colon B\rightarrow B'$ of morphisms in $\C$ such that
\begin{enumerate}[label={ \arabic{*}.} , leftmargin=1cm]
	\item $f_2 \alpha(a)=\alpha' f_1(a)$,
	\item $f_1(b\cdot a)=f_2(x)\cdot f_1(a)$,
	\item $f_1(b\star a)=f_2(x)\star f_1(a)$
\end{enumerate}
for any $x\in B$, $a\in A$ and $\star\in\Omega_2'$.  So the category $\XMod$ of crossed modules in groups with operation is obtained.

The following theorem was proved  in \cite[Theorem1]{Por}.
\begin{theorem} \label{Theocatequivalencess}  The category $\XMod$ of crossed modules  and the category $\Cat(\C)$  of internal groupoids in $\C$ are equivalent.\end{theorem}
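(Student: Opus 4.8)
The plan is to construct a pair of functors between $\Cat(\C)$ and $\XMod$ and to exhibit natural isomorphisms showing that they are mutually quasi-inverse, following the Brown--Spencer argument for group-groupoids but using the interchange law (\ref{1}) together with Orzech's semidirect product (Theorem \ref{Derivedaction}) to handle the extra operations in $\Omega_2'$.

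First I would define a functor $\eta\colon\Cat(\C)\to\XMod$. Given an internal groupoid $G$, set $B=G_0$ and $A=\Ker d_0=St_G 0$, which by Remark \ref{Remcorfromdef}(ii) is an object of $\C$ (indeed an ideal of $G$), and let $\alpha=d_1|_A\colon A\to B$. I would make $B$ act on $A$ by the derived action $b\cdot a=\epsilon(b)+a-\epsilon(b)$ for the additive operation and $b\star a=\epsilon(b)\star a$ for each $\star\in\Omega_2'$. The four conditions of Definition \ref{Defcrosmod} are then checked directly: CM1 and CM4 are immediate from the facts that $d_1$ and $\epsilon$ are morphisms in $\C$ and $d_1\epsilon=1$. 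The substantive points are CM2 and CM3, which follow from the interchange law (\ref{1}). Writing $b=d_1(a)$, one decomposes $a=a\circ\epsilon(b)$ and $a'=\epsilon(0)\circ a'$ and applies (\ref{1}) for $+$ and for $\star$ respectively; using $\epsilon(0)=0$ and the annihilation identity $a\star 0=0$ this yields $a+a'-a=\epsilon(b)+a'-\epsilon(b)$ and $a\star a'=\epsilon(b)\star a'$, which are exactly CM2 and CM3. On morphisms, a morphism $f\colon H\to G$ in $\Cat(\C)$ restricts to $\Ker d_0$ and descends on objects, giving a morphism of crossed modules, so $\eta$ is functorial.

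Next I would define a functor $\gamma\colon\XMod\to\Cat(\C)$. Given a crossed module $\alpha\colon A\to B$, the derived action makes the semidirect product $B\ltimes A$ an object of $\C$ by Theorem \ref{Derivedaction}; I take the internal groupoid with morphism object $G=B\ltimes A$, object object $G_0=B$, structure maps $d_0(b,a)=b$, $d_1(b,a)=b+\alpha(a)$, $\epsilon(b)=(b,0)$, and composition $(b,a)\circ(b+\alpha(a),a')=(b,a+a')$. Verifying that this is an internal groupoid in the sense of Definition \ref{Internalgpd} amounts to checking that $d_0,d_1,\epsilon$ and $\circ$ are morphisms in $\C$: $d_0$ and $\epsilon$ are clear, $d_1$ preserves $+$ by CM1 and preserves each $\star$ by CM4, and the decisive point is that the partial composition preserves all the operations, that is, that the interchange law (\ref{1}) holds for $G$, which reduces to CM2 and CM3 combined with the distributivity axioms (c) and (d). Functoriality of $\gamma$ is routine.

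Finally I would produce the natural isomorphisms. For $\eta\gamma\cong 1$ the computation is clean: in $B\ltimes A$ one has $\Ker d_0=\{(0,a)\mid a\in A\}\cong A$ and $d_1(0,a)=\alpha(a)$, so $\eta\gamma$ recovers $\alpha$ together with its action, naturally in the crossed module. For $\gamma\eta\cong 1$ the splitting $\epsilon$ supplies a canonical isomorphism $G\to G_0\ltimes\Ker d_0$, $g\mapsto(d_0 g,\,-\epsilon d_0 g+g)$ (whose second coordinate indeed lies in $\Ker d_0$), natural in $G$. I expect the main obstacle to be the third paragraph, namely showing that the composition on $B\ltimes A$ is a morphism of $\C$ for every $\star\in\Omega_2'$, equivalently that (\ref{1}) holds there: this is precisely where the passage from groups to groups with operations bites, requiring axioms (c), (d) and CM2--CM4 to be combined, and the corresponding step for $\eta$ (the verification of CM2 and CM3) is the same difficulty seen from the other side.
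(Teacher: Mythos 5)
Your proposal is correct and takes essentially the same route as the paper: the paper does not prove Theorem \ref{Theocatequivalencess} itself but cites Porter's Theorem 1, whose argument is precisely your Brown--Spencer-style construction ($A=\Ker d_0$ with the derived action through $\epsilon$, and conversely the semidirect product $B\ltimes A$ of Theorem \ref{Derivedaction}, with CM2--CM3 carrying the interchange law (\ref{1})). So your sketch is in effect a reconstruction of the cited proof, and there is no discrepancy to report.
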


By Theorem \ref{Theocatequivalencess}, evaluating the  covering morphism of internal groupoids (De\-fi\-nition \ref{Defofcovemorpigd}) in terms of the corresponding morphism of  crossed modules in $\C$, we can  obtain the notion of a covering morphism  of crossed modules in $\C$. If $f\colon H\rightarrow G$ is a covering morphism of internal groupoids in $\C$ as defined in Definition \ref{Defofcovemorpigd} and $(f_1,f_2)$ is the morphism of crossed modules corresponding to $f$, then $f_1\colon A\rightarrow A'$ is an isomorphism in $\C$, where $A=St_H 0$, $A'=St_G 0$ and $f_1$ is the restriction of $f$.  Therefore we call a morphism $(f_1,f_2)$  of crossed modules  from $\alpha\colon A\rightarrow B$ to $\alpha'\colon A'\rightarrow B'$ in $\C$ as a {\em cover} if  $f_1\colon A\rightarrow A'$ is an isomorphism in $\C$.

\begin{theorem} \label{Theocatequivalence}  Let  $G$ be an internal groupoid in $\C$ and    $\alpha\colon A\rightarrow B$  the  crossed module in $\C$  corresponding to $G$.  Let ${\Cov}_{\Cat(\C)}/G$ be the category of covers  of $G$ in the category $\Cat(\C)$ of  internal groupoids in $\C$ and let ${\Cov}_{\XMod}/(\alpha\colon A\rightarrow B)$ be the category of covers of $\alpha\colon A\rightarrow B$ in $\C$.  Then the categories ${\Cov}_{\Cat(\C)}/G$ and ${\Cov}_{\XMod}/(\alpha\colon A\rightarrow B)$ are equivalent. \end{theorem}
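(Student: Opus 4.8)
The plan is to promote Porter's equivalence $\Cat(\C)\simeq\XMod$ (Theorem~\ref{Theocatequivalencess}) to the level of covers. Recall that under this equivalence $G$ corresponds to $\alpha\colon A\rightarrow B$ with $A=St_G 0=\Ker d_0$, $B=G_0$ and $\alpha=d_1|_{St_G 0}$, and a morphism $f\colon\widetilde G\rightarrow G$ of internal groupoids is sent to the crossed module morphism $(f_1,f_2)$ with $f_1=f|_{St_{\widetilde G}0}$ and $f_2=f_0$. Since an equivalence of categories induces an equivalence of the slice categories $\Cat(\C)/G\simeq\XMod/(\alpha\colon A\rightarrow B)$, it suffices to show that this induced equivalence carries the full subcategory ${\Cov}_{\Cat(\C)}/G$ onto ${\Cov}_{\XMod}/(\alpha\colon A\rightarrow B)$ and back. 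Concretely, everything reduces to the single statement that a morphism $f\colon\widetilde G\rightarrow G$ over $G$ is a covering morphism of internal groupoids if and only if the associated $f_1$ is an isomorphism in $\C$.

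The forward implication is already recorded in the paragraph preceding the theorem: a covering morphism restricts to a bijection on each star, in particular on $St_{\widetilde G}0\rightarrow St_G 0$, and since both stars are internal groupoids (hence objects of $\C$) and $f_1$ is a morphism in $\C$, $f_1$ is an isomorphism. The substantive point, which I expect to be the main obstacle, is the converse: assuming only that $f_1\colon St_{\widetilde G}0\rightarrow St_G 0$ is an isomorphism, one must recover bijectivity of $f$ on \emph{every} star $St_{\widetilde G}\tilde x\rightarrow St_G f_0(\tilde x)$. Here I would exploit the group structure on morphisms to translate all stars to the star over the identity: for a fixed object $\tilde x$, the assignment $a\mapsto a+\epsilon(\tilde x)$ is a bijection $St_{\widetilde G}0\rightarrow St_{\widetilde G}\tilde x$ (with inverse $b\mapsto b-\epsilon(\tilde x)$), because $d_0$ is a group homomorphism and $d_0\epsilon(\tilde x)=\tilde x$; the same formula gives a bijection $St_G 0\rightarrow St_G f_0(\tilde x)$.

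These two translations are intertwined by $f$, since $f$ is a group homomorphism commuting with $\epsilon$: one has $f(a+\epsilon(\tilde x))=f(a)+\epsilon(f_0\tilde x)=f_1(a)+\epsilon(f_0\tilde x)$, so the square relating the star map at $\tilde x$ to $f_1$ through the two translation bijections commutes. Consequently the star map $St_{\widetilde G}\tilde x\rightarrow St_G f_0(\tilde x)$ is a bijection whenever $f_1$ is, and $f$ is a covering morphism; note that no transitivity is needed, as the argument uses only the group operations and $\epsilon$. This establishes the equivalence of the two conditions, so the functors of Porter's equivalence restrict to functors ${\Cov}_{\Cat(\C)}/G\to{\Cov}_{\XMod}/(\alpha\colon A\rightarrow B)$ and back. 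Finally I would observe that both cover categories are \emph{full} subcategories of the respective slices: any morphism between two covers over $G$ is automatically a covering morphism, and any morphism $(h_1,h_2)$ between two covers over $\alpha$ automatically has $h_1$ an isomorphism, since $g_1 h_1=f_1$ with $f_1,g_1$ isomorphisms forces $h_1=g_1^{-1}f_1$. Hence the natural isomorphisms witnessing $\Cat(\C)\simeq\XMod$ restrict verbatim, and the two cover categories are equivalent.
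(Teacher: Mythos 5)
Your proposal is correct, and it follows the same strategy the paper indicates: transporting Porter's equivalence $\Cat(\C)\simeq\XMod$ (Theorem \ref{Theocatequivalencess}) to the slice categories and restricting to covers. The difference is that the paper's proof consists of a single sentence deferring to Theorem \ref{Theocatequivalencess} with ``details omitted,'' whereas you actually supply those details, and the substantive one is genuinely absent from the paper: the paper records only the forward direction (a covering morphism of internal groupoids restricts to an isomorphism $St_{\widetilde G}0\rightarrow St_G 0$, which is how the notion of covering crossed module is motivated), but nowhere proves the converse, that $f_1$ being an isomorphism forces $f$ to be bijective on \emph{all} stars. Your translation argument --- $a\mapsto a+\epsilon(\tilde x)$ is a bijection $St_{\widetilde G}0\rightarrow St_{\widetilde G}\tilde x$ since $d_0$ and $\epsilon$ are morphisms in $\C$, and $f(a+\epsilon(\tilde x))=f_1(a)+\epsilon(f_0\tilde x)$ intertwines these translations with $f_1$ --- is correct and is exactly the missing lemma that makes ``cover'' in $\XMod$ the right counterpart of ``cover'' in $\Cat(\C)$; it is also pleasingly sharp in that it needs no transitivity hypothesis. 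Your fullness checks (a morphism between covers over $G$ is automatically a covering morphism, by factoring star bijections; and $h_1=g_1^{-1}f_1$ on the crossed-module side) are likewise needed, since the paper defines morphisms in ${\Cov}_{\Cat(\C)}/G$ to be covering morphisms, and these checks guarantee the restricted functors land where they should. In short: same route as the paper intends, but yours is the actual proof.
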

\begin{proof} The proof is obtained from Theorem \ref{Theocatequivalencess} and therefore the details are omitted.
\end{proof}

Let $X$ be an object of $\TC$. Then by Example \ref{Teotopgroupwithoper}, $\pi(X)$ is an internal groupoid and therefore  $d_1\colon St_{\pi(X)} 0\rightarrow X$ is a crossed module in  $\C$.  So as a result of Theorems \ref{eqcat}  and \ref{Theocatequivalence} we can obtain the following corollary.
\begin{corollary} If $X$ is a topological group with operations in $\TC$ whose underlying topology has a simply connected cover, then the category ${\Cov}_{\TC}/X$ of covers of $X$ in the category $\TC$ of topological groups with operations  and the category  ${\Cov}_{\XMod}/ (d_1\colon St_{\pi (X)} 0\rightarrow X)$ of covers of the crossed module $d_1\colon St_{\pi (X)} 0\rightarrow X$ in $\C$ are equivalent.
\end{corollary}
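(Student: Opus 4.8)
The plan is to derive the stated equivalence simply by \emph{composing} the two equivalences of categories already at our disposal, taking the internal groupoid to be $G=\pi(X)$. First I would check that the standing hypotheses are met: since $X$ is an object of $\TC$, Example~\ref{Teotopgroupwithoper} ensures that $\pi(X)$ is an internal groupoid in $\C$, and the assumption that the underlying topology of $X$ has a simply connected cover is precisely what Theorem~\ref{eqcat} requires. Theorem~\ref{eqcat} then supplies an equivalence
\[
  {\Cov}_{\TC}/X \;\simeq\; {\Cov}_{\Cat(\C)}/\pi(X).
\]

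Next I would pin down the crossed module that corresponds to $G=\pi(X)$ under Porter's equivalence (Theorem~\ref{Theocatequivalencess}). For a general internal groupoid $G$ the associated crossed module is the restriction of the final point map to the star of the identity, $d_1\colon St_G 0\rightarrow G_0$; for $G=\pi(X)$ we have $G_0=X$, so this is exactly $d_1\colon St_{\pi(X)}0\rightarrow X$, which is the crossed module already exhibited in the sentence preceding the statement. Applying Theorem~\ref{Theocatequivalence} with this choice of $G$ yields a second equivalence
\[
  {\Cov}_{\Cat(\C)}/\pi(X) \;\simeq\; {\Cov}_{\XMod}/(d_1\colon St_{\pi(X)}0\rightarrow X).
\]

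Finally, since a composite of two equivalences of categories is again an equivalence, chaining the two displayed equivalences through the common category ${\Cov}_{\Cat(\C)}/\pi(X)$ produces the desired equivalence
\[
  {\Cov}_{\TC}/X \;\simeq\; {\Cov}_{\XMod}/(d_1\colon St_{\pi(X)}0\rightarrow X).
\]
I do not expect a genuine obstacle here; the argument is purely formal once the two theorems are in hand. The only point deserving a moment's care is the identification in the middle paragraph, namely that the crossed module attached to $\pi(X)$ by Porter's correspondence is literally $d_1\colon St_{\pi(X)}0\rightarrow X$ rather than merely an isomorphic copy, and this is guaranteed by the explicit form of that correspondence.
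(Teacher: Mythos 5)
Your proposal is correct and matches the paper exactly: the authors obtain this corollary precisely by composing Theorem~\ref{eqcat} with Theorem~\ref{Theocatequivalence} applied to $G=\pi(X)$, whose associated crossed module is $d_1\colon St_{\pi(X)}0\rightarrow X$. Your extra care in identifying that crossed module explicitly is the one detail the paper leaves implicit, but it is the same argument.
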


\end{document}